\newtheorem{theorem}{Theorem}[section]
\newtheorem{lemma}[theorem]{Lemma}
\newtheorem{definition}[theorem]{Definition}
\newtheorem{corollary}[theorem]{Corollary}
\newtheorem{remark}[theorem]{Remark}
\newtheorem*{theorem A}{Theorem A}
\newtheorem*{corollary B}{Corollary B}
\newtheorem*{corollary C}{Corollary C}
\newtheorem*{theorem B}{Theorem B}
\theoremstyle{definition}
\newtheorem{example}[theorem]{Example}
\begin{document}
\title[Various shadowing properties for time varying maps]{Various shadowing properties for time varying maps}
\author[Javad Nazarian Sarkooh]{Javad Nazarian Sarkooh$^{*}$}
%\dedicatory{This paper is dedicated to Professor ABCD}
\address{Department of Mathematics, Ferdowsi University of Mashhad, Mashhad, IRAN.}
\email{\textcolor[rgb]{0.00,0.00,0.84}{javad.nazariansarkooh@gmail.com}}
\subjclass[2010]
{37B55; 37B05; 37B25; 37B45; 54H20; 54F15.}
 \keywords{Time varying map, Expansivity, Shadowing, h-shadowing, Limit shadowing, s-limit shadowing, Exponential limit shadowing, Uniformly contracting, Uniformly expanding.}
 \thanks{$^*$Corresponding author}
\begin{abstract}
This paper is concerned with the study of various notions of shadowing of dynamical systems induced by a sequence of maps, so-called time varying maps, on a metric space. We define and study the shadowing, h-shadowing, limit shadowing, s-limit shadowing and exponential limit shadowing properties of these dynamical systems. We show that h-shadowing, limit shadowing and s-limit shadowing properties are conjugacy invariant. Also, we investigate the relationships between these notions of shadowing for time varying maps and examine the role that expansivity plays in shadowing properties of such dynamical systems. Specially, we prove some results linking s-limit shadowing property to limit shadowing property, and h-shadowing property to s-limit shadowing and limit shadowing properties. Moreover, under the assumption of expansivity, we show that the shadowing property implies the h-shadowing, s-limit shadowing and limit shadowing properties. Finally, it is proved that the uniformly contracting and uniformly expanding time varying maps exhibit the shadowing, limit shadowing, s-limit shadowing and exponential limit shadowing properties.
\end{abstract}

\maketitle
\thispagestyle{empty}

\section{Introduction}
The time varying maps (so-called non-autonomous or time-dependent dynamical systems), describe situations where the dynamics can vary with time and yield very flexible models than autonomous cases for the study and description of real world processes. They may be used to describe the evolution
of a wider class of phenomena, including systems which are forced or driven. For example, any moving picture
on a television screen is an example of time varying dynamical systems. 
In the recent past, lots of studies have been done regarding dynamical properties in such systems, but a global 
theory is still out of reach. Kolyada et al. \cite{KS,KST} gave definition of topological entropy of time varying maps and discussed minimality of these systems. Also, $\omega$-limit sets and attraction of time varying maps were studied in \cite{JSC,KRR,LLCB}. Then, stability of time varying maps were investigated \cite{BV, KWW}. Thakkar and Das \cite{DTRD} studied expansiveness, shadowing and topological stability of time varying maps.
In \cite{HXWXZF,JNSFHG1,K1,K2,K3,JNSFHG}, authors studied topological entropy, topological pressure and thermodynamic properties of time varying maps. Weak mixing and chaos of time varying maps
were also studied by \cite{JSC1,OPWP,SYCGG,TCCGG}. Ott et al. \cite{O} studied the evolution of probability distributions and exponential loss of memory for certain time-dependent dynamical systems.

In general time varying maps can be rather complicated. Thus, we are inclined to look at approximations of orbits, also called pseudo orbits. Systems for which pseudo orbits can be approximated by true orbits are said to satisfy the shadowing property. The shadowing property plays a key role in the study of the stability of dynamical systems.
This property is found in hyperbolic dynamics, and it was used to prove their stability, see for example \cite{MLLL}.
In this literature, some remarkable results were further obtained through works of several authors,
see e.g. \cite{ASAMMR,BADGCOP,CBKDDD,FNMSAA,VGVG,KR,SKKK}.

Since the approximation by true orbits can be expressed in various ways, different notions of shadowing have been introduced. In this paper, what we want to study on time varying maps is shadowing, h-shadowing, limit shadowing, s-limit shadowing and exponential limit shadowing properties.

\textbf{This is how the paper is organized:} 
In Section \ref{section2}, we give a precise definition of a time varying map, review the main concepts and set up our notation. In this section, shadowing, h-shadowing, limit shadowing, s-limit shadowing and exponential limit shadowing properties for time varying maps are considered. Then, we study the basic properties of these notions of shadowing in Section \ref{section3}. Especially, we show that the h-shadowing, limit shadowing and s-limit shadowing properties are conjugacy invariant. Also, by considering these notions of shadowing, we extend earlier results from other papers and identify some subtle changes to the theory in this case. In section \ref{section333}, we investigate the relationships between these notions of shadowing for time varying maps and examine the role that expansivity plays in shadowing properties of such dynamical systems. Specially, we prove some results linking s-limit shadowing property to limit shadowing property, and h-shadowing property to s-limit shadowing and limit shadowing properties. Moreover, under the assumption of expansivity, we show that the shadowing property implies the h-shadowing, s-limit shadowing and limit shadowing properties. Finally, in Section \ref{section4}, we prove that the uniformly expanding and uniformly contracting time varying maps exhibit the shadowing, limit shadowing, s-limit shadowing and exponential limit shadowing properties. Also, we show that any time varying map of a finite set of hyperbolic linear homeomorphisms on a Banach space with the same stable and unstable subspaces has the shadowing, limit shadowing, s-limit shadowing and exponential limit shadowing properties.
%%%%%%%%%%%%%%%%%%%%%%%%%%%%%%%%%%%%%%%%%%%%%%%%%%%%
\section{Preliminaries}\label{section2}
Throughout this paper we consider $(X,d)$ to be a metric space, $f_{n}:X\to X$, $n\in\mathbb{N}$, to be a sequence of continuous maps and $\mathcal{F}=\{f_{n}\}_{n\in\mathbb{N}}$ to be a time varying map
on $X$ that its time evolution is defined by composing the maps $f_{n}$ in the following way
\begin{equation}
\mathcal{F}_{n}:=f_{n}\circ f_{n-1}\circ\cdots\circ f_{1},\ \textnormal{for}\ n\geq 1,\ \textnormal{and}\ \mathcal{F}_{0}:=Id_{X}.
\end{equation}

For time varying map $\mathcal{F}=\{f_{n}\}_{n\in\mathbb{N}}$ defined on $X$, we set $\mathcal{F}_{[i,j]}:=f_{j}\circ f_{j-1}\circ\cdots\circ f_{i+1}\circ f_{i}$ for $1\leq i\leq j$,
and $\mathcal{F}_{[i,j]}:=Id_{X}$ for $i>j$. Also, for any $k>0$, we define a
time varying map ($k^{th}$-iterate of $\mathcal{F}$) $\mathcal{F}^{k}=\{g_{n}\}_{n\in\mathbb{N}}$ on $X$, where
\begin{equation}
g_{n}=f_{nk}\circ f_{(n-1)k+k-1}\circ\ldots\circ f_{(n-1)k+2}\circ f_{(n-1)k+1}\ \text{for}\ n\geq 1.
\end{equation}
Thus $\mathcal{F}^{k}=\{\mathcal{F}_{[(n-1)k+1,nk]}\}_{n\in\mathbb{N}}$. Moreover, if time varying map $\mathcal{F}$ shifted $k$-times ($k\geq 1$), then we denote it by $\mathcal{F}(k,\textnormal{shift})$, i.e. $\mathcal{F}(k,\textnormal{shift})=\{f_{n}\}_{n=k+1}^{\infty}$.

%%%%%%%%%%%%%%%%%%%%%%%%%%%%%%%%%%%%%%%%%%%%%%%%%%%%
Let $\mathcal{F}=\{f_{n}\}_{n\in\mathbb{N}}$ be a time varying map on a metric space $(X,d)$. For a point $x_{0}\in X$, put $x_{n}:=\mathcal{F}_{n}(x_{0})$ for all $n\geq 0$. Then the sequence $\{x_{n}\}_{n\geq 0}$, denoted by $\mathcal{O}(x_{0})$, is said to be the \emph{orbit} of $x_{0}$ under time varying map $\mathcal{F}=\{f_{n}\}_{n\in\mathbb{N}}$. Moreover, a subset $Y$ of $X$ is said to be \emph{invariant} under $\mathcal{F}$ if $f_{n}(Y)=Y$ for all $n\geq 1$, equivalently $\mathcal{F}_{n}(Y)=Y$ for all $n\geq 0$.
%%%%%%%%%%%%%%%%%%%%%%%%%%%%%%%%%%%%%%%%%%%%%%%%%%%%
\begin{definition}[Conjugacy]
Let $(X,d_{1})$ and $(Y,d_{2})$ be two metric spaces. Let $\mathcal{F}=\{f_{n}\}_{n\in\mathbb{N}}$ and $\mathcal{G}=\{g_{n}\}_{n\in\mathbb{N}}$ be time varying maps on $X$ and $Y$, respectively. If there exists a homeomorphism $h:X\to Y$ such that $h\circ f_{n}=g_{n}\circ h$, for all $n\in\mathbb{N}$, then $\mathcal{F}$ and $\mathcal{G}$ are said to be \emph{conjugate} (with respect to the map $h$) or $h$-\emph{conjugate}. In particular, if $h:X\to Y$ is a uniform homeomorphism, then $\mathcal{F}$ and $\mathcal{G}$ are said to be \emph{uniformly conjugate} or \emph{uniformly} $h$-\emph{conjugate}. (Recall that homeomorphism $h:X\to Y$, such that $h$ and $h^{-1}$ are uniformly continuous, is called a uniform homeomorphism.)
\end{definition}
For example, if $\mathcal{F}=\{x^{n+1}\}_{n\in\mathbb{N}}$ on $[0,1]$ and $\mathcal{G}=\{2((x+1)/2)^{n+1}-1\}_{n\in\mathbb{N}}$ on $[-1,1]$, then $\mathcal{F}$ is uniformly $h$-conjugate to $\mathcal{G}$, where $h:[0,1]\to [-1,1]$ is defned by $h(x)=2x-1$, see \cite{DTRD}.
%%%%%%%%%%%%%%%%%%%%%%%%%%%%%%%%%%%%%%%%%%%%%%%%%%%%
\begin{definition}[Shadowing property]
Let $\mathcal{F}=\{f_{n}\}_{n\in\mathbb{N}}$ be a time varying map on a metric space $(X,d)$ and $Y$ be a subset of $X$. Then,
\begin{enumerate}
\item
for $\delta>0$, a sequence $\{x_{n}\}_{n\geq 0}$ in $X$ is said to be a $\delta$-\emph{pseudo orbit} if
\begin{equation*}
d(f_{n+1}(x_{n}),x_{n+1})<\delta\ \textnormal{for all}\ n\geq 0;
\end{equation*}
\item
for given $\varepsilon>0$, a $\delta$-pseudo orbit $\{x_{n}\}_{n\geq 0}$ is said to be
$\varepsilon$-\emph{shadowed} by $x\in X$ if $d(\mathcal{F}_{n}(x),x_{n})<\varepsilon$ for all $n\geq 0$;
\item
the time varying map $\mathcal{F}$ is said to have \emph{shadowing property} on $Y$ if, for every $\varepsilon>0$, there exists a $\delta>0$ such that every $\delta$-pseudo orbit in $Y$ is $\varepsilon$-shadowed by
some point of $X$. If this property holds on $Y=X$, we simply say that $\mathcal{F}$ has \emph{shadowing property}.
\end{enumerate}
\end{definition}
%%%%%%%%%%%%%%%%%%%%%%%%%%%%%%%%%%%%%%%%%%%%%%%%%%%%
\begin{remark}
If $\mathcal{F}=\{f_{n}\}_{n\in\mathbb{N}}$ be a time varying map on a compact metric space $(X,d)$ and  $Y$ be a subset of $X$, then it is easy to see that the time varying map $\mathcal{F}$ has shadowing property on $Y$ if and only if for every $\varepsilon>0$ there is a $\delta>0$ such that every finite $\delta$-pseudo orbit in $Y$ is $\varepsilon$-shadowed by some point of $X$.
\end{remark}
%%%%%%%%%%%%%%%%%%%%%%%%%%%%%%%%%%%%%%%%%%%%%%%%%%%%
\begin{definition}[h-shadowing property]
Let $\mathcal{F}=\{f_{n}\}_{n\in\mathbb{N}}$ be a time varying map on a metric space $(X,d)$ and $Y$ be a subset of $X$. We say that $\mathcal{F}$ has \emph{h-shadowing property} on $Y$ if 
for every $\varepsilon>0$ there exists $\delta>0$ such that for every finite $\delta$-pseudo orbit
$\{x_{0},x_{1},\ldots,x_{m}\}$ in $Y$ there is $x\in X$ such that $d(\mathcal{F}_{n}(x),x_{n})<\varepsilon$ for every $0\leq n<m$ and $\mathcal{F}_{m}(x)=x_{m}$.  If this property holds on $Y=X$, we simply say that $\mathcal{F}$ has \emph{h-shadowing property}.
\end{definition}

\begin{remark}\label{remark1}
It is easy to see that every time varying map on a compact metric space with h-shadowing property has shadowing property, but the converse is not true, see \cite[Example 6.4]{BADGCOP}. Note that each continuous map generates a time varying map.
\end{remark}
%%%%%%%%%%%%%%%%%%%%%%%%%%%%%%%%%%%%%%%%%%%%%%%%%%%%
Now, we define the limit shadowing and s-limit shadowing properties on time varying maps.
\begin{definition}[Limit shadowing property]
Let $\mathcal{F}=\{f_{n}\}_{n\in\mathbb{N}}$ be a time varying map on a metric space $(X,d)$ and $Y$ be a subset of $X$. Then,
\begin{enumerate}
\item a sequence $\{x_{n}\}_{n\geq 0}$ in $X$ is called a \emph{limit pseudo orbit} if $d(f_{n+1}(x_{n}),x_{n+1})\to 0$ as $ n\to +\infty$;
\item a sequence $\{x_{n}\}_{n\geq 0}$ in $X$ is said to be \emph{limit shadowed} if there is $x\in X$ such that $d(\mathcal{F}_{n}(x),x_{n})\to 0$, as $n\to +\infty$;
\item the time varying map $\mathcal{F}$ has the \emph{limit shadowing property} on $Y$ whenever every limit pseudo orbit in $Y$ is limit shadowed by some point of $X$. If this property holds on $Y=X$, we simply say that $\mathcal{F}$ has \emph{limit shadowing property}.
\end{enumerate}
\end{definition}
%%%%%%%%%%%%%%%%%%%%%%%%%%%%%%%%%%%%%%%%%%%%%%%%%%%%
The notion of limit shadowing property was extended to a notion so called s-limit shadowing property, to account the fact that many systems have limit shadowing property but not shadowing property. 
%%%%%%%%%%%%%%%%%%%%%%%%%%%%%%%%%%%%%%%%%%%%%%%%%%%%
\begin{definition}[s-Limit shadowing property]
Let $\mathcal{F}=\{f_{n}\}_{n\in\mathbb{N}}$ be a time varying map on a metric space $(X,d)$ and $Y$ be a subset of $X$. We say that $\mathcal{F}$ has \emph{s-limit shadowing property} on $Y$ if for
every $\varepsilon > 0$ there is $\delta > 0$ such that
\begin{enumerate}
\item for every $\delta$-pseudo orbit $\{x_{n}\}_{n\geq 0}$ in $Y$, there exists $x\in X$ satisfying $d(\mathcal{F}_{n}(x),x_{n})<\varepsilon$ for all $n\geq 0$, and,
\item if in addition, $\{x_{n}\}_{n\geq 0}$ is a limit pseudo orbit in $Y$ then $d(\mathcal{F}_{n}(x),x_{n})\to 0$ as $n\to +\infty$.
\end{enumerate}
If this property holds on $Y=X$, we simply say that $\mathcal{F}$ has \emph{s-limit shadowing property}.
\end{definition}
%%%%%%%%%%%%%%%%%%%%%%%%%%%%%%%%%%%%%%%%%%%%%%%%%%%%
\begin{example}\label{example1}
Let $X=[0,1]\cup\{-1/2^{n}:n\geq 1\}$ and $f:X\to X$ be any
homeomorphism such that $f(x)=x$ for $x=1$ or $x\leq 0$ and $f(x)<x$ for
$x\in(0,1)$. Put $f_{n}=f$ for every $n\in\mathbb{N}$. Then time varying map $\mathcal{F}=\{f_{n}\}_{n\in\mathbb{N}}$ on $X$ has shadowing and limit shadowing properties, but it
does not have s-limit shadowing property, see \cite[Example 3.5]{BADGCOP}.
\end{example}
%%%%%%%%%%%%%%%%%%%%%%%%%%%%%%%%%%%%%%%%%%%%%%%%%%%%
We say that a sequence $\{a_{n}\}_{n\geq 0}$ of real numbers converges to zero with rate $\theta\in(0,1)$ and write $a_{n}\xrightarrow{\theta} 0$ as $ n\to +\infty$, if there exists a constant $L>0$ such that $|a_{n}|\leq L\theta^{n}$ for all $n\geq 0$.
%%%%%%%%%%%%%%%%%%%%%%%%%%%%%%%%%%%%%%%%%%%%%%%%%%%%
\begin{definition}[Exponential limit shadowing property]
Let $\mathcal{F}=\{f_{n}\}_{n\in\mathbb{N}}$ be a time varying map on a metric space $(X,d)$ and $Y$ be a subset of $X$. Then,
\begin{enumerate}
\item
for $\theta\in(0,1)$, a sequence $\{x_{n}\}_{n\geq 0}$ in $X$ is called a $\theta$-\emph{exponentially
limit pseudo orbit} of $\mathcal{F}$ if $d(f_{n+1}(x_{n}),x_{n+1})\xrightarrow{\theta} 0$ as $ n\to +\infty$;
\item
the time varying map $\mathcal{F}$ has the \emph{exponential limit shadowing property with exponent} $\xi$ on $Y$ if there exists $\theta_{0}\in(0,1)$ so that for any $\theta$-exponentially limit pseudo orbit $\{x_{n}\}_{n\geq 0}\subseteq Y$ with $\theta\in(\theta_{0},1)$, there is $x\in X$ such that $d(\mathcal{F}_{n}(x),x_{n})\xrightarrow{\theta^{\xi}} 0$, as $n\to +\infty$. In the case $\xi=1$ we say that F has the \emph{exponential limit shadowing property} on $Y$. If this property holds on $Y=X$, we simply say that $\mathcal{F}$ has \emph{exponential limit shadowing property}.
\end{enumerate}
\end{definition}
%%%%%%%%%%%%%%%%%%%%%%%%%%%%%%%%%%%%%%%%%%%%%%%%%%%%

%%%%%%%%%%%%%%%%%%%%%%%%%%%%%%%%%%%%%%%%%%%%%%%%%%%%
\section{Basic properties of various notions of shadowing}\label{section3}
%%%%%%%%%%%%%%%%%%%%%%%%%%%%%%%%%%%%%%%%%%%%%%%%%%%%
Our aim of this section is to characterize the basic properties of various notions of shadowing (i.e. shadowing, h-shadowing, limit shadowing, s-limit shadowing and exponential limit shadowing properties) of time varying maps. Specially, we show that h-shadowing, limit shadowing and s-limit shadowing properties are conjugacy invariant. Moreover, by considering these notions of shadowing, we are able to extend earlier results from other papers, identifying some subtle changes to the theory in this case.

Thakkar and Das in \cite[Theorem 3.1]{DTRD} show that the shadowing property of time varying maps is conjugacy invariant. Now, in the following theorem, we show that the h-shadowing, limit shadowing and s-limit shadowing properties are conjugacy invariant. Our approach is similar to \cite[Theorem 3.1]{DTRD} and for completeness we give its proof.
%%%%%%%%%%%%%%%%%%%%%%%%%%%%%%%%%%%%%%%%%%%%%%%%%%%%
\begin{theorem}
Let $\mathcal{F}=\{f_{n}\}_{n\in\mathbb{N}}$ and $\mathcal{G}=\{g_{n}\}_{n\in\mathbb{N}}$ be time varying maps on metric spaces $(X,d_{1})$ and $(Y,d_{2})$, respectively, such that $\mathcal{F}$ is uniformly conjugate to $\mathcal{G}$. Then, the following statements hold:
\begin{itemize}
\item[\textnormal{(a)}]
If $\mathcal{F}$ has the h-shadowing property, then so does $\mathcal{G}$.
\item[\textnormal{(b)}]
If $\mathcal{F}$ has the limit shadowing property, then so does $\mathcal{G}$.
\item[\textnormal{(c)}]
If $\mathcal{F}$ has the s-limit shadowing property, then so does $\mathcal{G}$.
\end{itemize}
\end{theorem}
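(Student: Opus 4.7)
The plan is to apply the standard conjugacy-transfer strategy: given a (possibly limit) pseudo orbit of $\mathcal{G}$ in $Y$, pull it back to $X$ through $h^{-1}$, apply the assumed shadowing-type property of $\mathcal{F}$, and push the resulting shadowing point forward by $h$. The intertwining relation $h\circ f_{n}=g_{n}\circ h$ gives $h\circ\mathcal{F}_{n}=\mathcal{G}_{n}\circ h$ for all $n\geq 0$ by induction, and the uniform continuity of both $h$ and $h^{-1}$ is exactly what converts metric estimates in $(X,d_{1})$ into estimates in $(Y,d_{2})$ and vice versa.

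For part (a), given $\varepsilon>0$, I would first use uniform continuity of $h$ to choose $\varepsilon'>0$ so that $d_{1}(a,b)<\varepsilon'$ implies $d_{2}(h(a),h(b))<\varepsilon$, apply h-shadowing of $\mathcal{F}$ to $\varepsilon'$ to obtain $\delta'>0$, and finally use uniform continuity of $h^{-1}$ to pick $\delta>0$ with $d_{2}(y,y')<\delta$ implying $d_{1}(h^{-1}(y),h^{-1}(y'))<\delta'$. For any finite $\delta$-pseudo orbit $\{y_{0},\ldots,y_{m}\}$ of $\mathcal{G}$ in $Y$, the sequence $x_{n}:=h^{-1}(y_{n})$ satisfies $f_{n+1}(x_{n})=h^{-1}(g_{n+1}(y_{n}))$ and is therefore a $\delta'$-pseudo orbit of $\mathcal{F}$. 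Let $x\in X$ h-shadow it and put $z:=h(x)$; then $\mathcal{G}_{n}(z)=h(\mathcal{F}_{n}(x))$, so $d_{2}(\mathcal{G}_{n}(z),y_{n})<\varepsilon$ for $0\leq n<m$ and $\mathcal{G}_{m}(z)=h(\mathcal{F}_{m}(x))=h(x_{m})=y_{m}$.

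For part (b), no $\varepsilon$-$\delta$ is needed: for any limit pseudo orbit $\{y_{n}\}$ of $\mathcal{G}$, the pullback $x_{n}:=h^{-1}(y_{n})$ is a limit pseudo orbit of $\mathcal{F}$ because uniform continuity of $h^{-1}$ sends $d_{2}(g_{n+1}(y_{n}),y_{n+1})\to 0$ to $d_{1}(f_{n+1}(x_{n}),x_{n+1})\to 0$. Applying limit shadowing of $\mathcal{F}$ yields $x\in X$ with $d_{1}(\mathcal{F}_{n}(x),x_{n})\to 0$, and then uniform continuity of $h$ transports this to $d_{2}(\mathcal{G}_{n}(h(x)),y_{n})\to 0$. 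Part (c) merges the two arguments: the $\varepsilon$-$\delta$ step is done exactly as in (a) but with s-limit shadowing of $\mathcal{F}$ replacing h-shadowing, while the limit-orbit clause is handled by observing that the same pullback $x_{n}=h^{-1}(y_{n})$ is automatically a limit pseudo orbit whenever $\{y_{n}\}$ is, so the single point produced by s-limit shadowing of $\mathcal{F}$ simultaneously delivers both conclusions after being pushed forward by $h$.

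The main obstacle is bookkeeping rather than any conceptual difficulty: one must order the two uses of uniform continuity correctly ($h^{-1}$ to translate the input condition, $h$ to translate the output condition) so that the moduli chain up, and in part (c) one must notice that a single application of s-limit shadowing of $\mathcal{F}$ supplies both the uniform $\varepsilon$-closeness and the asymptotic vanishing at the $Y$-level. Compactness is nowhere invoked, so the proof works verbatim in the general metric setting of the statement.
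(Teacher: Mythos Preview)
Your proposal is correct and, for parts (a) and (b), matches the paper's proof essentially word for word: the same intertwining identity $h\circ\mathcal{F}_{n}=\mathcal{G}_{n}\circ h$, the same ordering of the uniform-continuity moduli ($h$ for the output, $h^{-1}$ for the input), and the same pull-back/shadow/push-forward scheme.

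For part (c) there is a small difference worth noting. The paper dispatches (c) in one line as ``a direct consequence of part (b) and \cite[Theorem 3.1]{DTRD}'' (the latter being conjugacy invariance of ordinary shadowing). Your direct argument is actually more careful: s-limit shadowing is not merely the conjunction of shadowing and limit shadowing, since the \emph{same} point must both $\varepsilon$-shadow and limit-shadow. Your proof makes this explicit by applying s-limit shadowing of $\mathcal{F}$ once to the pulled-back sequence and then pushing forward the single resulting point, whereas the paper's sentence is best read as shorthand for exactly this combination of the two proof techniques. Either way the content is the same conjugacy-transfer argument; your version simply spells out the step the paper leaves implicit.
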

\begin{proof}
Since $\mathcal{F}$ is uniformly conjugate to $\mathcal{G}$, there exists a uniform homeomorphism
$h:X\to Y$ such that $h\circ f_{n}=g_{n}\circ h$ for all $n\in\mathbb{N}$, which implies
$f_{n}\circ h^{-1}=h^{-1}\circ g_{n}$ for all $n\in\mathbb{N}$. Hence, for all $n\geq 0$,
\begin{eqnarray*}
h\circ\mathcal{F}_{n}
&=& h\circ f_{n}\circ f_{n-1}\circ\cdots\circ f_{1}\\
&=& g_{n}\circ h\circ f_{n-1}\circ\cdots\circ f_{1}\\
&\vdots &\\
&=& g_{n}\circ g_{n-1}\circ\cdots\circ g_{1}\circ h\\
&=& \mathcal{G}_{n}\circ h.
\end{eqnarray*}

\textnormal{(a)}. Let $\varepsilon>0$ be given. By uniform continuity of $h$ there exists an $\varepsilon_{0}>0$ such that $d_{1}(x,y)<\varepsilon_{0}$ implies $d_{2}(h(x),h(y))<\varepsilon$. Since $\mathcal{F}$ has h-shadowing property there exists a $\delta_{0}>0$ such that any finite $\delta_{0}$-pseudo orbit of $\mathcal{F}$ is $\varepsilon_{0}$-shadowed (with exact hit at the end) by $\mathcal{F}$ orbit of some point of $X$. Since $h^{-1}$ is uniformly continuous, for $\delta_{0}>0$ there exists a  $\delta>0$ such that $d_{2}(x,y)<\delta$ implies $d_{1}(h^{-1}(x),h^{-1}(y))<\delta_{0}$. Now, let $\{x_{0},x_{1},\ldots,x_{m}\}$ be a finite $\delta$-pseudo orbit
for $\mathcal{G}$, i.e. $d_{2}(g_{n+1}(x_{n}),x_{n+1})<\delta$ for every $0\leq n<m$. Hence 
$d_{1}(h^{-1}(g_{n+1}(x_{n})),h^{-1}(x_{n+1}))<\delta_{0}$, and so
$d_{1}(f_{n+1}(h^{-1}(x_{n})),h^{-1}(x_{n+1}))<\delta_{0}$. Therefore
$\{h^{-1}(x_{0}),h^{-1}(x_{1}),\ldots,h^{-1}(x_{m})\}$ is a finite $\delta_{0}$-pseudo orbit
for $\mathcal{F}$. Thus there exists a $x\in X$ such that $d_{1}(\mathcal{F}_{n}(x),h^{-1}(x_{n}))<\varepsilon_{0}$ for every $0\leq n<m$ and $\mathcal{F}_{m}(x)=h^{-1}(x_{m})$. Hence, $d_{2}(h(\mathcal{F}_{n}(x)),x_{n})<\varepsilon$ for every $0\leq n<m$ and $h(\mathcal{F}_{m}(x))=x_{m}$. Thus $d_{2}(\mathcal{G}_{n}(h(x)),x_{n})<\varepsilon$ for every $0\leq n<m$ and $\mathcal{G}_{m}(h(x))=x_{m}$. Thus $\mathcal{G}$ also has the h-shadowing property.

\textnormal{(b)}. Let $\mathcal{F}$ has the limit shadowing property, and let $\{y_{n}\}_{n\geq 0}$ be a limit pseudo orbit of $\mathcal{G}$, i.e. $d_{2}(g_{n+1}(y_{n}),y_{n+1})\to 0$ as $n\to +\infty$. Then, by uniform continuity of $h^{-1}$,
$d_{1}(h^{-1}\circ g_{n+1}(y_{n}),h^{-1}(y_{n+1}))\to 0$ as $n\to +\infty$, and so $d_{1}(f_{n+1}\circ h^{-1}(y_{n}),h^{-1}(y_{n+1}))\to 0$ as $n\to +\infty$. Thus $\{h^{-1}(y_{n})\}_{n\geq 0}$ is a limit pseudo orbit of $\mathcal{F}$. Hence, there exists $x\in X$ such that $d_{1}(\mathcal{F}_{n}(x),h^{-1}(y_{n}))\to 0$ as $n\to +\infty$. Again, by uniform continuity of $h$, $d_{2}(h\circ\mathcal{F}_{n}(x),h\circ h^{-1}(y_{n}))\to 0$ as $n\to +\infty$, and so $d_{2}(\mathcal{G}_{n}\circ h(x),h\circ h^{-1}(y_{n}))\to 0$ as $n\to +\infty$.
Hence, $d_{2}(\mathcal{G}_{n}(h(x)),y_{n})\to 0$ as $n\to +\infty$, which implies $\{y_{n}\}_{n\geq 0}$ is limit shadowed by $h(x)\in Y$. Consequently, any limit pseudo orbit of $\mathcal{G}$ can be limit shadowed by some point of $Y$. Thus $\mathcal{G}$ also has the limit shadowing property.

Finally, the part (c) is a direct consequence of part (b) and \cite[Theorem 3.1]{DTRD}.
\end{proof}
%%%%%%%%%%%%%%%%%%%%%%%%%%%%%%%%%%%%%%%%%%%%%%%%%%%%
Thakkar and Das in \cite[Theorem 3.3]{DTRD} show that every finite direct product of time varying maps has the shadowing property if and only if all of its time varying maps have shadowing property. Now, in the following theorem, we extend this property to other notions of shadowing.
%%%%%%%%%%%%%%%%%%%%%%%%%%%%%%%%%%%%%%%%%%%%%%%%%%%%
\begin{theorem}\label{theorem1}
Let $\mathcal{F}=\{f_{n}\}_{n\in\mathbb{N}}$ and $\mathcal{G}=\{g_{n}\}_{n\in\mathbb{N}}$ be time varying maps on metric spaces $(X,d_{1})$ and $(Y,d_{2})$, respectively. Define metric $d$ on $X\times Y$ by
\begin{equation*}
d((x_{1},y_{1}),(x_{2},y_{2})):=\max\{d_{1}(x_{1},x_{2}),d_{2}(y_{1},y_{2})\}\quad \text{for \ any} \ (x_{1},y_{1}),(x_{2},y_{2})\in X\times Y.
\end{equation*}
Then,
\begin{itemize}
\item[\textnormal{(a)}]
$\mathcal{F}$ and $\mathcal{G}$ have the h-shadowing property if and only if so does $\mathcal{F}\times\mathcal{G}:=\{f_{n}\times g_{n}\}_{n\in\mathbb{N}}$.
\item[\textnormal{(b)}]
$\mathcal{F}$ and $\mathcal{G}$ have the limit shadowing property if and only if so does $\mathcal{F}\times\mathcal{G}$.
\item[\textnormal{(c)}]
$\mathcal{F}$ and $\mathcal{G}$ have the exponential limit shadowing property if and only if so does $\mathcal{F}\times\mathcal{G}$.
\item[\textnormal{(d)}]
$\mathcal{F}$ and $\mathcal{G}$ have the s-limit shadowing property if and only if so does $\mathcal{F}\times\mathcal{G}$.
\end{itemize}
\end{theorem}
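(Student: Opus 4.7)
The plan is to exploit the key feature of the max metric: for any sequence $\{(x_n,y_n)\}$ in $X\times Y$ and any $(x,y)\in X\times Y$,
\[
d\bigl((f_{n+1}\times g_{n+1})(x_n,y_n),(x_{n+1},y_{n+1})\bigr)=\max\bigl\{d_1(f_{n+1}(x_n),x_{n+1}),\,d_2(g_{n+1}(y_n),y_{n+1})\bigr\},
\]
and likewise $d((\mathcal{F}\times\mathcal{G})_n(x,y),(x_n,y_n))=\max\{d_1(\mathcal{F}_n(x),x_n),d_2(\mathcal{G}_n(y),y_n)\}$. Consequently, a sequence in $X\times Y$ is a $\delta$-pseudo orbit (resp.\ a limit pseudo orbit, resp.\ a $\theta$-exponentially limit pseudo orbit) for $\mathcal{F}\times\mathcal{G}$ if and only if each of its coordinate sequences is one for the corresponding factor, and the analogous equivalence holds for $\varepsilon$-shadowing, limit shadowing, and $\theta$-exponential limit shadowing with exactly the same constants and rates.

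Given this decoupling, I would prove each of (a)--(d) in two symmetric halves. For the direction ``$\mathcal{F}$ and $\mathcal{G}$ have the property $\Rightarrow \mathcal{F}\times\mathcal{G}$ has it'', given a relevant pseudo orbit of $\mathcal{F}\times\mathcal{G}$, project to the two factors, apply the hypothesis on each factor with the shared $\varepsilon>0$, and take the pair of shadowing points produced in the two coordinates. For part (c) one chooses the threshold for the product to be $\theta_0:=\max\{\theta_0^{\mathcal{F}},\theta_0^{\mathcal{G}}\}$, where $\theta_0^{\mathcal{F}},\theta_0^{\mathcal{G}}\in(0,1)$ are the thresholds guaranteed by the exponential limit shadowing of the two factors, so that any $\theta\in(\theta_0,1)$ works for both. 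For the converse ``$\mathcal{F}\times\mathcal{G}\Rightarrow\mathcal{F}$'' (the $\mathcal{G}$-case being symmetric), fix an arbitrary $y^*\in Y$ and attach to any pseudo orbit $\{x_n\}$ of $\mathcal{F}$ the genuine $\mathcal{G}$-orbit $\{\mathcal{G}_n(y^*)\}$; since $g_{n+1}(\mathcal{G}_n(y^*))=\mathcal{G}_{n+1}(y^*)$, the enlarged sequence $\{(x_n,\mathcal{G}_n(y^*))\}$ inherits every pseudo-orbit, limit, or exponential-limit attribute of $\{x_n\}$ with the same parameters. Applying the product hypothesis and projecting the resulting shadowing point onto $X$ yields what is needed; in part (a), the exact-hit identity $(\mathcal{F}_m(x),\mathcal{G}_m(y))=(x_m,\mathcal{G}_m(y^*))$ forces $\mathcal{F}_m(x)=x_m$ as required.

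The only place meriting a small additional remark is part (d): s-limit shadowing demands that one and the same point serve as both an $\varepsilon$-shadow and, whenever the pseudo orbit happens to also be a limit pseudo orbit, as a limit shadow. This is not a genuine obstacle, because each factor hypothesis delivers precisely such a simultaneous witness, and the max-metric decoupling preserves both conditions coordinatewise when the two witnesses are combined into a pair $(x,y)$; in the reverse direction, a product-level simultaneous witness projects to factor-level simultaneous witnesses. Thus no extra case analysis or second choice of $\delta$ is required beyond the common $\delta$ supplied by the two factor hypotheses.
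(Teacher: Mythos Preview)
Your proposal is correct and follows essentially the same max-metric decoupling argument as the paper, which in fact only writes out part (c) in detail and declares (a), (b), (d) to be routine. One minor point of difference worth noting: for the converse direction the paper combines an arbitrary $\theta$-exponentially limit pseudo orbit of $\mathcal{F}$ with an arbitrary one of $\mathcal{G}$ simultaneously, whereas you attach a genuine $\mathcal{G}$-orbit $\{\mathcal{G}_n(y^*)\}$ to a given pseudo orbit of $\mathcal{F}$; your choice is cleaner, since it makes explicit why a second-coordinate pseudo orbit with the required parameters is always available.
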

\begin{proof}
By definitions and \cite[Theorem 3.3]{DTRD} the proof of parts (a), (b) and (d) are not difficult, hence we only prove the part (c).

\textnormal{(c)}. Let time varying maps $\mathcal{F}$ and $\mathcal{G}$ have the exponential limit shadowing property. Then, there exists $\theta_{\mathcal{F}}\in(0,1)$ such that for any $\theta$-exponentially limit pseudo orbit $\{x_{n}\}_{n\geq 0}$ of $\mathcal{F}$ with $\theta\in(\theta_{\mathcal{F}},1)$, there is $x\in X$ such that  $d(\mathcal{F}_{n}(x),x_{n})\xrightarrow{\theta} 0$,
as $n\to +\infty$. Also, there exists $\theta_{\mathcal{G}}\in(0,1)$ such that for any $\theta$-exponentially limit pseudo orbit $\{y_{n}\}_{n\geq 0}$ of $\mathcal{G}$ with $\theta\in(\theta_{\mathcal{G}},1)$, there is $y\in Y$
such that $d(\mathcal{G}_{n}(y),y_{n})\xrightarrow{\theta} 0$, as $n\to +\infty$. Put $\theta_{0}=\max\{\theta_{\mathcal{G}},\theta_{\mathcal{F}}\}$, and let $\{(x_{n},y_{n})\}_{n\geq 0}$ be a $\theta$-exponentially limit pseudo orbit of $\mathcal{F}\times\mathcal{G}$ with $\theta\in(\theta_{0},1)$, i.e. $d((f_{n+1}\times g_{n+1})(x_{n},y_{n}),(x_{n+1},y_{n+1}))\xrightarrow{\theta} 0$ as $ n\to +\infty$.
Then, $d_{1}(f_{n+1}(x_{n}),x_{n+1})\xrightarrow{\theta} 0$ and $d_{2}(g_{n+1}(y_{n}),y_{n+1})\xrightarrow{\theta} 0$, as $ n\to +\infty$. Hence, there exist $x\in X$ and $y\in Y$ such that $d_{1}(\mathcal{F}_{n}(x),x_{n})\xrightarrow{\theta} 0$ and $d_{2}(\mathcal{G}_{n}(y),y_{n})\xrightarrow{\theta} 0$, as $n\to +\infty$. Therefore,
\begin{eqnarray*}
d((\mathcal{F}\times\mathcal{G})_{n}(x,y),(x_{n},y_{n}))
&=& d((\mathcal{F}_{n}(x),\mathcal{G}_{n}(y)),(x_{n},y_{n}))\\
&\leq& \max\{d_{1}(\mathcal{F}_{n}(x),x_{n}),d_{2}(\mathcal{G}_{n}(y),y_{n})\}\xrightarrow{\theta} 0\ \textnormal{as}\ n\to +\infty
\end{eqnarray*}
which implies $\mathcal{F}\times\mathcal{G}$ also has the exponential limit shadowing property.

Conversely, let the direct product $\mathcal{F}\times\mathcal{G}$ has the exponential limit shadowing property, and so there exists $\theta_{0}\in(0,1)$ such that for any $\theta$-exponentially limit pseudo orbit $\{(x_{n},y_{n})\}_{n\geq 0}$ of $\mathcal{F}\times\mathcal{G}$ with $\theta\in(\theta_{0},1)$, there is $(x,y)\in X\times Y$ such that  $d((\mathcal{F}\times\mathcal{G})_{n}(x,y),(x_{n},y_{n}))\xrightarrow{\theta} 0$,
as $n\to +\infty$. Now, let $\{x_{n}\}_{n\geq 0}$ and $\{y_{n}\}_{n\geq 0}$ be $\theta$-exponentially limit pseudo orbits of $\mathcal{F}$ and $\mathcal{G}$ with $\theta\in(\theta_{0},1)$, respectively, i.e. $d_{1}(f_{n+1}(x_{n}),x_{n+1})\xrightarrow{\theta} 0$ and $d_{2}(g_{n+1}(y_{n}),y_{n+1})\xrightarrow{\theta} 0$, as $ n\to +\infty$. Hence,
\begin{eqnarray*}
d((f_{n+1}\times g_{n+1})(x_{n},y_{n}),(x_{n+1},y_{n+1}))
&=& d((f_{n+1}(x_{n}), g_{n+1}(y_{n})),(x_{n+1},y_{n+1}))\\
&=& \max\{d_{1}(f_{n+1}(x_{n}),x_{n+1}),d_{2}(g_{n+1}(y_{n}),y_{n+1})\}\\
&\leq& d_{1}(f_{n+1}(x_{n}),x_{n+1})+d_{2}(g_{n+1}(y_{n}),y_{n+1})\xrightarrow{\theta} 0
\end{eqnarray*}
as $ n\to +\infty$. Therefore, $\{(x_{n},y_{n})\}_{n\geq 0}$ is a $\theta$-exponentially limit pseudo orbit of $\mathcal{F}\times\mathcal{G}$, and so there exists $(x,y)\in X\times Y$ such that
$d((\mathcal{F}\times\mathcal{G})_{n}(x,y),(x_{n},y_{n}))=d((\mathcal{F}_{n}(x),\mathcal{G}_{n}(y)),(x_{n},y_{n}))\xrightarrow{\theta} 0$ as $n\to +\infty$. Hence, $d_{1}(\mathcal{F}_{n}(x),x_{n})\xrightarrow{\theta} 0$ and $d_{2}(\mathcal{G}_{n}(y),y_{n})\xrightarrow{\theta} 0$, as $n\to +\infty$, which implies $\mathcal{F}$ and $\mathcal{G}$ have also the exponential limit shadowing property.
\end{proof}
%%%%%%%%%%%%%%%%%%%%%%%%%%%%%%%%%%%%%%%%%%%%%%%%%%%%
Hence, every finite direct product of time varying maps has the h-shadowing, limit shadowing, s-limit shadowing
or exponential limit shadowing property if and only if all of its time varying maps have h-shadowing, limit shadowing, s-limit shadowing or exponential limit shadowing property, respectively.
%%%%%%%%%%%%%%%%%%%%%%%%%%%%%%%%%%%%%%%%%%%%%%%%%%%%
\begin{theorem}
Let $\mathcal{F}=\{f_{n}\}_{n\in\mathbb{N}}$ be a time varying map on metric space $(X,d)$ and $k\in\mathbb{N}$.
Then, the following statements hold:
\begin{itemize}
\item[\textnormal{(a)}]
If $\mathcal{F}$ has the limit shadowing property, then so does $\mathcal{F}^{k}$.
\item[\textnormal{(b)}]
If $\mathcal{F}$ has the exponential limit shadowing property, then so does $\mathcal{F}^{k}$.
\item[\textnormal{(c)}]
If $\mathcal{F}$ has the s-limit shadowing property, then so does $\mathcal{F}^{k}$.
\end{itemize}
\end{theorem}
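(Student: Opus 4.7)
The plan is, in each case, to lift a pseudo orbit of $\mathcal{F}^{k}$ into a pseudo orbit of $\mathcal{F}$ by interpolating the intermediate iterates, apply the corresponding shadowing hypothesis for $\mathcal{F}$, and then restrict the resulting shadow back to the subsequence at multiples of $k$. Concretely, given any sequence $\{y_{n}\}_{n\geq 0}\subseteq X$, I would set $x_{nk}:=y_{n}$ and $x_{nk+j}:=\mathcal{F}_{[nk+1,\,nk+j]}(y_{n})$ for $1\leq j\leq k-1$; with this definition one checks that $f_{m+1}(x_{m})=x_{m+1}$ whenever $m+1$ is not a multiple of $k$, while at $m=(n+1)k-1$ one has $d(f_{m+1}(x_{m}),x_{m+1})=d(g_{n+1}(y_{n}),y_{n+1})$. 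Thus the only potentially nonzero pseudo orbit errors of $\{x_{n}\}$ occur on the subsequence $m=(n+1)k-1$, and they equal the $\mathcal{F}^{k}$-errors of $\{y_{n}\}$. Note also that $(\mathcal{F}^{k})_{m}=\mathcal{F}_{mk}$, so the restriction step at the end is straightforward.

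For part (a), if $\{y_{n}\}$ is a limit pseudo orbit of $\mathcal{F}^{k}$ then the nonzero errors of $\{x_{n}\}$ tend to zero, so $\{x_{n}\}$ is a limit pseudo orbit of $\mathcal{F}$. Limit shadowing of $\mathcal{F}$ then produces $x\in X$ with $d(\mathcal{F}_{n}(x),x_{n})\to 0$; restricting to $n=mk$ gives $d((\mathcal{F}^{k})_{m}(x),y_{m})\to 0$, which is the desired limit shadowing for $\mathcal{F}^{k}$. Part (c) proceeds by essentially the same argument: given $\varepsilon>0$, I would pick $\delta>0$ from the s-limit shadowing of $\mathcal{F}$ (for this same $\varepsilon$) and show this $\delta$ works for $\mathcal{F}^{k}$. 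Indeed, a $\delta$-pseudo orbit of $\mathcal{F}^{k}$ lifts via the interpolation to a $\delta$-pseudo orbit of $\mathcal{F}$ (all lifted errors vanish except on $m=(n+1)k-1$, where they are bounded by $\delta$), and it is additionally a limit pseudo orbit of $\mathcal{F}$ whenever the original was a limit pseudo orbit of $\mathcal{F}^{k}$. Applying s-limit shadowing of $\mathcal{F}$ to $\{x_{n}\}$ and restricting the shadow to the multiples of $k$ then yields both the $\varepsilon$-closeness and the limit condition for $\{y_{n}\}$.

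Part (b) is the step I expect to be the main obstacle, because the exponential rate must be rescaled. For a $\theta$-exponentially limit pseudo orbit $\{y_{n}\}$ of $\mathcal{F}^{k}$ satisfying $d(g_{n+1}(y_{n}),y_{n+1})\leq L\theta^{n}$, I would set $\tilde{\theta}:=\theta^{1/k}$ and verify from the identity $\theta^{n}=\tilde{\theta}^{kn}=\tilde{\theta}^{m+1-k}$ at $m=(n+1)k-1$ that the lifted sequence $\{x_{n}\}$ is a $\tilde{\theta}$-exponentially limit pseudo orbit of $\mathcal{F}$, with constant $L\tilde{\theta}^{1-k}$. Declaring the threshold for $\mathcal{F}^{k}$ to be the $k$-th power of the threshold $\theta_{0}$ of $\mathcal{F}$ ensures that $\tilde{\theta}$ lies in the admissible range for $\mathcal{F}$ whenever $\theta$ lies in the prescribed range for $\mathcal{F}^{k}$. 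Exponential limit shadowing of $\mathcal{F}$ then produces $x\in X$ with $d(\mathcal{F}_{m}(x),x_{m})\leq L'\tilde{\theta}^{m}$ for some $L'>0$, and evaluating at $m=jk$ gives $d((\mathcal{F}^{k})_{j}(x),y_{j})\leq L'\theta^{j}$, which is the required exponential shadow at rate $\theta$. Beyond keeping the indexing of $\mathcal{F}^{k}$ and the $k$-th root rate conversion straight, all other steps reduce to bookkeeping.
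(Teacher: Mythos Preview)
Your proposal is correct and follows essentially the same approach as the paper: interpolate a pseudo orbit of $\mathcal{F}^{k}$ into one of $\mathcal{F}$ by filling in the intermediate iterates (so that the only nonzero errors sit at indices $m=(n+1)k-1$ and equal the $\mathcal{F}^{k}$-errors), apply the hypothesis for $\mathcal{F}$, and restrict the shadow to multiples of $k$. The only cosmetic differences are that for (c) the paper appeals to part (a) together with a cited shadowing-for-iterates result rather than writing out the direct argument you give, and for (b) the paper takes the threshold $\theta_{0}^{1/k}$ for $\mathcal{F}^{k}$ whereas you take the sharper $\theta_{0}^{k}$; both choices guarantee $\tilde{\theta}=\theta^{1/k}\in(\theta_{0},1)$, and the constant $L\tilde{\theta}^{1-k}$ you record matches the paper's $L\theta^{(1-k)/k}$.
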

\begin{proof}
\textnormal{(a)}. The case $k=1$ is trivial, so let $k\geq 2$ and $\{y_{n}\}_{n\geq0}$ be a limit pseudo orbit of $\mathcal{F}^{k}$. Then $d(g_{n+1}(y_{n}),y_{n+1})\to 0$ as $ n\to +\infty$, where $g_{n}=\mathcal{F}_{[(n-1)k+1,nk]}$ for all $n\in\mathbb{N}$, and so $d(\mathcal{F}_{[nk+1,(n+1)k]}(y_{n}),y_{n+1})\to 0$ as $ n\to +\infty$. Put
\begin{equation}\label{jjj13}
x_{nk+j}:=\mathcal{F}_{[nk+1,nk+j]}(y_{n})\ \textnormal{for}\ 0\leq j<k\ \textnormal{and}\ n\geq 0.
\end{equation}
$\mathbf{Claim.}$ The sequence $\{x_{n}\}_{n\geq0}$ is a limit pseudo orbit for $\mathcal{F}$, i.e. $d(f_{nk+j+1}(x_{nk+j}),x_{nk+j+1})\to 0$ as $ n\to +\infty$, for all $n\geq 0$ and $0\leq j<k$.

To prove the claim, choose any $n\geq 0$. Then for any $0\leq j<k-1$,
\begin{equation*}
f_{nk+j+1}(x_{nk+j})=f_{nk+j+1}(\mathcal{F}_{[nk+1,nk+j]}(y_{n}))=\mathcal{F}_{[nk+1,nk+j+1]}(y_{n})=x_{nk+j+1}.
\end{equation*}
Thus $d(f_{nk+j+1}(x_{nk+j}),x_{nk+j+1})=0$, for all $n\geq 0$ and $0\leq j<k-1$. Now for $j=k-1$,
\begin{eqnarray*}
d(f_{nk+k}(x_{nk+k-1}),x_{nk+k})
&=& d(f_{nk+k}(\mathcal{F}_{[nk+1,nk+k-1]}(y_{n})),x_{nk+k})\\
&=& d(\mathcal{F}_{[nk+1,(n+1)K]}(y_{n}),y_{n+1})\\
&=& d(g_{n+1}(y_{n}),y_{n+1}).
\end{eqnarray*}
Hence, $d(f_{n+1}(x_{n}),x_{n+1})\to 0$ as $ n\to +\infty$, since $d(g_{n+1}(y_{n}),y_{n+1})\to 0$ as $ n\to +\infty$, which completes the proof of the claim.

Now, by the limit shadowing property of $\mathcal{F}$, $\{x_{n}\}_{n\geq0}$ is limit shadowed by some $x\in X$, i.e.
$d(\mathcal{F}_{n}(x),x_{n})\to 0$ as $n\to +\infty$. In particular, $d(\mathcal{F}_{kn}(x),x_{kn})\to 0$ as $n\to +\infty$. Thus $d(\mathcal{F}^{k}_{n}(x),y_{n})\to 0$ as $n\to +\infty$, since $y_{n}=x_{kn}$ and $\mathcal{F}^{k}_{n}=\mathcal{F}_{kn}$, which implies $\{y_{n}\}_{n\geq 0}$ is limit shadowed by $x\in X$. Consequently, any limit pseudo orbit of $\mathcal{F}^{k}$ can be limit shadowed by some point of $X$. Thus $\mathcal{F}^{k}$ has also the limit shadowing property.

\textnormal{(b)}.
For $k=1$ this is trivial, so let $k\geq 2$. Since the time varying map $\mathcal{F}$ has the exponential limit shadowing property, there exists $\theta_{0}\in(0,1)$ such that for any $\theta$-exponentially limit pseudo orbit $\{x_{n}\}_{n\geq 0}$ of $\mathcal{F}$ with $\theta\in(\theta_{0},1)$, there is $x\in X$ such that  $d(\mathcal{F}_{n}(x),x_{n})\xrightarrow{\theta} 0$, as $n\to +\infty$. Put $\theta_{1}:=\theta_{0}^{1/k}$. Now, let $\{y_{n}\}_{n\geq0}$ be a $\theta$-exponentially limit pseudo orbit of $\mathcal{F}^{k}$ with $\theta\in(\theta_{1},1)$, i.e. $d(g_{n+1}(y_{n}),y_{n+1})\xrightarrow{\theta} 0$ as $ n\to +\infty$, where $g_{n}=\mathcal{F}_{[(n-1)k+1,nk]}$ for all $n\in\mathbb{N}$. Hence, there is $L>0$ such that $d(g_{n+1}(y_{n}),y_{n+1})\leq L\theta^{n}$ for all $n\geq 0$. Consider the sequence $\{x_{n}\}_{n\geq 0}$ given by relation (\ref{jjj13}). Then, one has for all $n\geq 0$ and $0\leq j<k-1$:
\begin{equation*}
d(f_{nk+j+1}(x_{nk+j}),x_{nk+j+1})=0\leq (L\theta^{\frac{1-k}{k}})(\theta^{1/k})^{nk+j},
\end{equation*}
and for $j=k-1$:
\begin{equation*}
d(f_{nk+k}(x_{nk+k-1}),x_{nk+k})=d(g_{n+1}(y_{n}),y_{n+1})\leq L\theta^{n}=(L\theta^{\frac{1-k}{k}})(\theta^{1/k})^{nk+k-1}.
\end{equation*}
Thus $\{x_{n}\}_{n\geq0}$ is a $\theta^{1/k}$-exponentially limit pseudo orbit for $\mathcal{F}$. Hence, by exponential limit shadowing property of $\mathcal{F}$, there is $x\in X$ such that
$d(\mathcal{F}_{n}(x),x_{n})\xrightarrow{\theta^{1/k}} 0$ as $n\to +\infty$. In particular, $d(\mathcal{F}_{kn}(x),x_{kn})\xrightarrow{\theta^{1/k}} 0$ as $n\to +\infty$. Thus $d(\mathcal{F}^{k}_{n}(x),y_{n})\xrightarrow{\theta} 0$ as $n\to +\infty$, which implies $\mathcal{F}^{k}$ has the exponential limit shadowing property.

Finally, part $\textnormal{(c)}$ is a direct consequence of part $\textnormal{(a)}$ and \cite[Theorem 3.3]{DTRD}, which completes the proof of the theorem.
\end{proof}
%%%%%%%%%%%%%%%%%%%%%%%%%%%%%%%%%%%%%%%%%%%%%%%%%%%%
In the following theorem, we show that if we can find some iterate of a time varying map which has h-shadowing property, then we can deduce that the time varying map itself has h-shadowing property. We need the assumption of equicontinuity.
\begin{definition}[Equicontinuity]
Time varying map $\mathcal{F}=\{f_{n}\}_{n\in\mathbb{N}}$ on a metric space $(X,d)$ is said to be \emph{equicontinuous} if for each $\varepsilon>0$ there exists $\delta>0$ such that $d(x,y)<\delta$ implies $d(\mathcal{F}_{[i,j]}(x),\mathcal{F}_{[i,j]}(y))<\varepsilon$ for all $1\leq i\leq j$.
\end{definition}
%%%%%%%%%%%%%%%%%%%%%%%%%%%%%%%%%%%%%%%%%%%%%%%%%%%%
\begin{theorem}
Let $\mathcal{F}=\{f_{n}\}_{n\in\mathbb{N}}$ be an equicontinuous time varying map on a compact metric space $(X,d)$ and $Y$ be an invariant subset of $X$. Then, the following conditions are equivalent:
\begin{itemize}
\item[\textnormal{(a)}]
$\mathcal{F}$ has the h-shadowing property on $Y$.
\item[\textnormal{(b)}]
$\mathcal{F}^{k}$ has the h-shadowing property on $Y$ for some $k\in\mathbb{N}$.
\item[\textnormal{(c)}]
$\mathcal{F}^{k}$ has the h-shadowing property on $Y$ for all $k\in\mathbb{N}$.
\end{itemize}
\end{theorem}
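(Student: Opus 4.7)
The equivalence reduces to three implications. (c)$\Rightarrow$(b) is trivial, while (a)$\Rightarrow$(c) follows the same expansion technique used in the previous theorem on limit shadowing: given a $\delta$-pseudo orbit $\{y_{n}\}_{n=0}^{N}$ of $\mathcal{F}^{k}$ in $Y$, define $x_{nk+j}:=\mathcal{F}_{[nk+1,nk+j]}(y_{n})$ for $0\le j<k$, $0\le n<N$, and $x_{Nk}:=y_{N}$. Invariance of $Y$ keeps each $x_{p}$ inside $Y$, and the only nonzero consecutive errors of $\{x_{p}\}$ occur at indices $p=(n+1)k-1$, where they coincide with $d(g_{n+1}(y_{n}),y_{n+1})<\delta$. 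Applying h-shadowing of $\mathcal{F}$ to this expanded pseudo orbit and restricting to multiples of $k$ yields h-shadowing of $\mathcal{F}^{k}$; equicontinuity is not needed here.

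For the substantive direction (b)$\Rightarrow$(a), my plan is to choose constants in the following order. Given $\varepsilon>0$, use equicontinuity to pick $\alpha>0$ so that $d(u,v)<\alpha$ implies $d(\mathcal{F}_{[i,j]}(u),\mathcal{F}_{[i,j]}(v))<\varepsilon/3$ for all $1\le i\le j$; apply h-shadowing of $\mathcal{F}^{k}$ with tolerance $\min(\alpha,\varepsilon/3)$ to obtain $\beta>0$; use equicontinuity again for $\gamma>0$ so that $d(u,v)<\gamma$ forces the corresponding image distances below $\min(\beta,\varepsilon)/(3k)$; finally set $\delta:=\min(\gamma,\beta/(3k),\varepsilon/(3k))$. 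Given a $\delta$-pseudo orbit $\{x_{0},\dots,x_{m}\}$ in $Y$, write $m=Nk+r$ with $0\le r<k$. A telescoping estimate
\[
d(\mathcal{F}_{[nk+1,(n+1)k]}(x_{nk}),x_{(n+1)k})\le\sum_{j=0}^{k-1}d\bigl(\mathcal{F}_{[nk+j+2,(n+1)k]}(f_{nk+j+1}(x_{nk+j})),\mathcal{F}_{[nk+j+2,(n+1)k]}(x_{nk+j+1})\bigr)
\]
then shows that $\{x_{0},x_{k},\dots,x_{Nk}\}$ is a $\beta$-pseudo orbit of $\mathcal{F}^{k}$.

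In the case $r=0$, h-shadowing of $\mathcal{F}^{k}$ directly supplies $x$ with $\mathcal{F}_{Nk}(x)=x_{m}$ and $d(\mathcal{F}_{nk}(x),x_{nk})<\alpha$ for $n<N$; the intermediate distances $d(\mathcal{F}_{nk+j}(x),x_{nk+j})$ with $0<j<k$ are bounded below $\varepsilon$ by a triangle-inequality split into an equicontinuity term (using $d(\mathcal{F}_{nk}(x),x_{nk})<\alpha$) and a telescoping pseudo-orbit term of the same shape as displayed above, and the required exact hit $\mathcal{F}_{m}(x)=x_{m}$ is automatic.

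The case $r>0$ is where I expect the main obstacle. The natural attempt is to extend the pseudo orbit by $x_{m+j}:=\mathcal{F}_{[m+1,m+j]}(x_{m})$ for $1\le j\le k-r$, which contributes zero additional error, obtaining a length-$(N+1)k$ pseudo orbit to which the $r=0$ argument applies; this produces $x$ with $\mathcal{F}_{(N+1)k}(x)=x_{(N+1)k}=\mathcal{F}_{[m+1,(N+1)k]}(x_{m})$ and good intermediate tracking up to index $m$. However, unpacking the endpoint only gives $\mathcal{F}_{[m+1,(N+1)k]}(\mathcal{F}_{m}(x))=\mathcal{F}_{[m+1,(N+1)k]}(x_{m})$, which does not by itself force the exact hit $\mathcal{F}_{m}(x)=x_{m}$. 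To close this gap I would leverage invariance of $Y$, which makes each restriction $\mathcal{F}_{[i,j]}|_{Y}$ surjective onto $Y$, together with compactness of $X$, to select $x$ inside the fibre $\mathcal{F}_{[m+1,(N+1)k]}^{-1}(x_{m})$; alternatively I would reformulate the problem as h-shadowing for the shifted iterate $\mathcal{F}(r,\textnormal{shift})^{k}$ applied to the pseudo orbit $\{x_{r},x_{r+k},\dots,x_{m}\}$ via a shift-invariance lemma. Reconciling the exact-hit requirement with the block structure imposed by $\mathcal{F}^{k}$ is the step where compactness and equicontinuity are genuinely used.
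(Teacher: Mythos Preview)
Your plan coincides with the paper's proof in all three implications: (a)$\Rightarrow$(c) by the expansion you describe, (c)$\Rightarrow$(b) trivially, and (b)$\Rightarrow$(a) via the same equicontinuity constants, telescoping estimate, and extension $x_{m+j}:=\mathcal{F}_{[m+1,m+j]}(x_{m})$ for $1\le j\le k-r$ in the case $r>0$. The paper arrives at precisely the identity you isolate, namely $\mathcal{F}_{(N+1)k}(x)=\mathcal{F}_{[m+1,(N+1)k]}(x_{m})$.

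The one substantive difference is that the paper does \emph{not} address the obstacle you raise. It simply writes ``$f_{m+k-r}\circ\cdots\circ f_{1}(x)=x_{(s+1)k}=f_{m+k-r}\circ\cdots\circ f_{m+1}(x_{m})$ which implies $\mathcal{F}_{m}(x)=x_{m}$'' and moves on, treating the cancellation of $\mathcal{F}_{[m+1,(N+1)k]}$ as automatic. As you correctly note, this step needs injectivity of the tail composition, which is not among the hypotheses; so the gap you anticipate is in fact present, and unacknowledged, in the paper's own argument. Neither of your proposed workarounds appears there. Incidentally, your first workaround cannot succeed as phrased: h-shadowing of $\mathcal{F}^{k}$ supplies a single point $x$, not a family, so there is no post-hoc freedom to select within a fibre; any repair would have to act before invoking h-shadowing (for instance along the lines of your shift idea), not after.
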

\begin{proof}
First we prove $(a)\Rightarrow(c)$. By the h-shadowing property of $\mathcal{F}$, for $\varepsilon>0$ there exists $\delta>0$ such that every finite $\delta$-pseudo orbit in $Y$ is $\varepsilon$-shadowed (with exact hit at
the end) by $\mathcal{F}$ orbit of some point in $X$. Now, let $\{x_{0},x_{1},\ldots,x_{m}\}\subseteq Y$ be a finite $\delta$-pseudo orbit for $\mathcal{F}^{k}=\{g_{n}\}_{n\in\mathbb{N}}$ , where $g_{n}=\mathcal{F}_{[(n-1)k+1,nk]}$. Then the sequence
\begin{center}
$\{x_{0},\mathcal{F}_{[1,1]}(x_{0}),\mathcal{F}_{[1,2]}(x_{0}),\ldots,\mathcal{F}_{[1,k-1]}(x_{0}),x_{1},\mathcal{F}_{[k+1,k+1]}(x_{1}),\mathcal{F}_{[k+1,k+2]}(x_{1}),\ldots$
\end{center}
\begin{center}
$,\mathcal{F}_{[k+1,2k-1]}(x_{1}),x_{2},\ldots,x_{m-1},\mathcal{F}_{[(m-1)k+1,(m-1)k+1]}(x_{m-1}),\mathcal{F}_{[(m-1)k+1,(m-1)k+2]}(x_{m-1}),\ldots$
\end{center}
\begin{center}
$,\mathcal{F}_{[(m-1)k+1,mk-1]}(x_{m-1}),x_{m}\}$
\end{center}
is a finite $\delta$-pseudo orbit for $\mathcal{F}$ which $\varepsilon$-shadowed by some point $x\in X$ 
such that $\mathcal{F}_{mk}(x)=x_{m}$. Hence $\mathcal{F}^{k}$ has the h-shadowing property on $Y$ for all $k\in\mathbb{N}$ because of $g_{n}\circ\cdots\circ g_{1}=\mathcal{F}_{nk}$ for $1\leq n\leq m$.

Implication $(c)\Rightarrow(b)$ is trivial. 

To prove $(b)\Rightarrow(a)$, fix $\varepsilon>0$ and suppose that $\mathcal{F}^{k}$ has h-shadowing property on $Y$ for some $k\in\mathbb{N}$. Since time varying map $\mathcal{F}$ is equicontinuous and $X$ is compact, there exists $\eta>0$ such that $d(x,y)<\eta$ implies 
$d(\mathcal{F}_{[n,n+i]}(x),\mathcal{F}_{[n,n+i]}(y))<\frac{\varepsilon}{2}$ for every $n\geq 1$ and $0\leq i\leq k$.

By the h-shadowing property of $\mathcal{F}^{k}$ there exists $0<\delta<\frac{\varepsilon}{2}$ such that each finite $\delta$-pseudo orbit of $\mathcal{F}^{k}$ is $\eta$-shadowed by $\mathcal{F}^{k}$ orbit of some point of $X$ which hits the last element of the pseudo orbit. Since time varying map $\mathcal{F}$ is equicontinuous and $X$ is compact, there exists $0<\gamma<\frac{\delta}{k}$ such that $d(x,y)<\gamma$ implies $d(\mathcal{F}_{[n,n+i]}(x),\mathcal{F}_{[n,n+i]}(y))<\frac{\delta}{k}$ for every $n\geq 1$ and $0\leq i\leq k$. 

Now, let $\{x_{0},x_{1},\ldots,x_{m}\}\subseteq Y$ be any finite $\gamma$-pseudo orbit for $\mathcal{F}$ and write $m=sk+r$ for some $s\geq 0$ and some $0\leq r<k$. Then the sequence 
\begin{center}
$\{x_{0},x_{1},\ldots,x_{m},\mathcal{F}_{[m+1,m+1]}(x_{m}),\mathcal{F}_{[m+1,m+2]}(x_{m}),\ldots,\mathcal{F}_{[m+1,m+k-r]}(x_{m})\}\subseteq Y$
\end{center}
is a finite $\gamma$-pseudo orbit for $\mathcal{F}$ (note that $Y$ is an invariant subset of $X$), which we enumerate obtaining the sequence $\{x_{0},x_{1},\ldots,x_{(s+1)k}\}$. We claim that $\{x_{0},x_{k},x_{2k},\ldots,x_{(s+1)k}\}$
is a finite $\delta$-pseudo orbit for $\mathcal{F}^{k}$. Indeed,
\begin{eqnarray*}
d(\mathcal{F}_{[1,k]}(x_{0}),x_{k})
&\leq& d(x_{k},\mathcal{F}_{[k,k]}(x_{k-1}))+d(\mathcal{F}_{[k,k]}(x_{k-1}),\mathcal{F}_{[k-1,k]}(x_{k-2}))\\
&& +\cdots+d(\mathcal{F}_{[2,k]}(x_{1}),\mathcal{F}_{[1,k]}(x_{0}))\\
&<& \gamma+(k-1)\dfrac{\delta}{k}<\delta.
\end{eqnarray*}
Similarly for $1\leq i\leq s$, we have $d(\mathcal{F}_{[(i-1)k+1,ik]}(x_{(i-1)k}),x_{ik})<\delta$. Finally,
\begin{eqnarray*}
d(\mathcal{F}_{[sk+1,(s+1)k]}(x_{sk}),x_{(s+1)k})
&\leq& d(x_{(s+1)k},\mathcal{F}_{[(s+1)k,(s+1)k]}(x_{(s+1)k-1}))\\
&& +d(\mathcal{F}_{[(s+1)k,(s+1)k]}(x_{(s+1)k-1}),\mathcal{F}_{[(s+1)k-1,(s+1)k]}(x_{(s+1)k-2}))\\
&& +\cdots+d(\mathcal{F}_{[sk+2,(s+1)k]}(x_{sk+1}),\mathcal{F}_{[sk+1,(s+1)k]}(x_{sk}))\\
&<& r(\dfrac{\delta}{k})<\delta.
\end{eqnarray*}
By h-shadowing property of $\mathcal{F}^{k}$ there is $x\in X$ such that $d(\mathcal{F}_{[1,ik]}(x),x_{ik})<\eta$ for every $0\leq i\leq s$ and $\mathcal{F}_{[1,(s+1)k]}(x)=x_{(s+1)k}$. Now, for every $0\leq j<k$ we have 
\begin{center}
$d(\mathcal{F}_{[1,ik+j]}(x),\mathcal{F}_{[ik+1,ik+j]}(x_{ik}))<\dfrac{\varepsilon}{2}$
\end{center}
and
\begin{eqnarray*}
d(\mathcal{F}_{[ik+1,ik+j]}(x_{ik}),x_{ik+j})
&\leq& d(x_{ik+j},\mathcal{F}_{[ik+j,ik+j]}(x_{ik+j-1}))\\
&& +d(\mathcal{F}_{[ik+j,ik+j]}(x_{ik+j-1}),\mathcal{F}_{[ik+j-1,ik+j]}(x_{ik+j-2}))\\
&& +\cdots+d(\mathcal{F}_{[ik+2,ik+j]}(x_{ik+1}),\mathcal{F}_{[ik+1,ik+j]}(x_{ik}))\\
&<& \gamma+(k-1)\dfrac{\delta}{k}<\delta<\dfrac{\varepsilon}{2}.
\end{eqnarray*}
So $d(\mathcal{F}_{[1,ik+j]}(x),x_{ik+j})<\varepsilon$. Also, 
$f_{m+k-r}\circ\cdots\circ f_{1}(x)=x_{(s+1)k}=f_{m+k-r}\circ\cdots\circ f_{m+1}(x_{m})$ which implies $\mathcal{F}_{m}(x)=x_{m}$. Hence $\mathcal{F}$ has the h-shadowing property on $Y$.
\end{proof}
%%%%%%%%%%%%%%%%%%%%%%%%%%%%%%%%%%%%%%%%%%%%%%%%%%%%
\begin{lemma}
Let $\mathcal{F}=\{f_{n}\}_{n\in\mathbb{N}}$ be a time varying map on a metric space $(X,d)$ and $k\in\mathbb{N}$ such that $f_{1},f_{2},\ldots,f_{k}$ are surjective. Then, $\mathcal{F}$ has the limit shadowing
property if and only if $\mathcal{F}(k,\textnormal{shift})=\{f_{n}\}_{n=k+1}^{\infty}$ has the limit shadowing property.
\end{lemma}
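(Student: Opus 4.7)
The plan is to exploit the algebraic identity $\mathcal{F}_{k+n} = \mathcal{F}(k,\textnormal{shift})_{n} \circ \mathcal{F}_{k}$ together with the fact that surjectivity of $f_{1},\ldots,f_{k}$ forces $\mathcal{F}_{k}=f_{k}\circ\cdots\circ f_{1}$ to be surjective. The other guiding principle is that the definition of limit shadowing only cares about tail behavior, so truncating or prepending a finite initial segment of a (pseudo) orbit costs nothing.

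For the forward direction, I assume $\mathcal{F}$ has limit shadowing and let $\{y_{n}\}_{n\geq 0}$ be a limit pseudo orbit of $\mathcal{F}(k,\textnormal{shift})$, i.e.\ $d(f_{k+n+1}(y_{n}),y_{n+1})\to 0$. Using surjectivity of $\mathcal{F}_{k}$, I pick $x_{0}\in X$ with $\mathcal{F}_{k}(x_{0})=y_{0}$, set $x_{i}=\mathcal{F}_{i}(x_{0})$ for $0\leq i< k$, and set $x_{k+n}=y_{n}$ for $n\geq 0$ (consistent at index $k$). On the prefix the consecutive distances $d(f_{i+1}(x_{i}),x_{i+1})$ vanish identically, and on the tail they equal $d(f_{k+n+1}(y_{n}),y_{n+1})$, which tends to zero; so $\{x_{n}\}$ is a limit pseudo orbit of $\mathcal{F}$. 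Applying limit shadowing gives $z\in X$ with $d(\mathcal{F}_{n}(z),x_{n})\to 0$; specializing to indices $n=k+m$ and rewriting $\mathcal{F}_{k+m}(z)=\mathcal{F}(k,\textnormal{shift})_{m}(\mathcal{F}_{k}(z))$ shows that $\mathcal{F}_{k}(z)\in X$ limit shadows $\{y_{m}\}$.

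For the reverse direction, I assume $\mathcal{F}(k,\textnormal{shift})$ has limit shadowing and let $\{x_{n}\}_{n\geq 0}$ be a limit pseudo orbit of $\mathcal{F}$. Setting $y_{n}:=x_{k+n}$ gives $d(f_{k+n+1}(y_{n}),y_{n+1})=d(f_{k+n+1}(x_{k+n}),x_{k+n+1})\to 0$, so $\{y_{n}\}$ is a limit pseudo orbit of $\mathcal{F}(k,\textnormal{shift})$. Let $z\in X$ limit shadow it. Using surjectivity of $\mathcal{F}_{k}$ again, pick $w\in X$ with $\mathcal{F}_{k}(w)=z$; then $\mathcal{F}_{k+n}(w)=\mathcal{F}(k,\textnormal{shift})_{n}(z)$, so $d(\mathcal{F}_{k+n}(w),x_{k+n})=d(\mathcal{F}(k,\textnormal{shift})_{n}(z),y_{n})\to 0$. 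This controls all but the first $k$ terms of the sequence $d(\mathcal{F}_{n}(w),x_{n})$, hence the whole sequence tends to zero; so $w$ limit shadows $\{x_{n}\}$.

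There is no real obstacle here; the only subtlety is recognizing the role of surjectivity as the bridge in both directions (to produce a valid $x_{0}$ in the prefix construction, and to pull the shadowing point $z$ back to a preimage $w$ under $\mathcal{F}_{k}$), and noting that limit shadowing is insensitive to finite prefixes so re-indexing by $k$ does not damage the conclusion. The writing obstacle is mostly bookkeeping to ensure the identity $\mathcal{F}_{k+n}=\mathcal{F}(k,\textnormal{shift})_{n}\circ\mathcal{F}_{k}$ is applied cleanly.
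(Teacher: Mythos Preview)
Your argument is correct and follows essentially the same route as the paper: exploit the identity $\mathcal{F}_{k+n}=\mathcal{F}(k,\textnormal{shift})_{n}\circ\mathcal{F}_{k}$, re-index by $k$, and use that limit shadowing ignores any finite prefix. One small remark: in the direction $\mathcal{F}\Rightarrow\mathcal{F}(k,\textnormal{shift})$ the paper simply prepends \emph{arbitrary} points $y_{0},\ldots,y_{k-1}$ to the given limit pseudo orbit (the first $k$ jump errors being large is irrelevant for a limit pseudo orbit), so surjectivity is not actually needed there; you invoke it to build an exact orbit prefix, which is tidy but superfluous. Surjectivity is genuinely needed only in the other direction, to pull the shadowing point back through $\mathcal{F}_{k}$, exactly as you do.
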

\begin{proof}
Let $\mathcal{F}(k,\textnormal{shift})$ has the limit shadowing property, and let $\{x_{n}\}_{n\geq 0}$ be a limit pseudo orbit of $\mathcal{F}$, i.e. $d(f_{n+1}(x_{n}),x_{n+1})\to 0$ as $n\to +\infty$. Then, the sequence $\{y_{n}\}_{n\geq 0}$ in which $y_{n}=x_{n+k}$ is a limit pseudo orbit of $\mathcal{F}(k,\textnormal{shift})$, i.e. $d(f_{n+k+1}(y_{n}),y_{n+1})\to 0$ as $ n\to +\infty$. Hence, there exists $y\in X$ such that $d(\mathcal{F}_{n}(k,\textnormal{shift})(y),y_{n})\to 0$ as $n\to +\infty$, where $\mathcal{F}_{n}(k,\textnormal{shift})=f_{n+k}\circ\cdots\circ f_{k+2}\circ f_{k+1}$. Now, consider a preimage $x$ of $y$ under $\mathcal{F}_{k}$, i.e. $\mathcal{F}_{k}(x)=y$ (note that $f_{1},f_{2},\ldots,f_{k}$ are surjective). Then $d(\mathcal{F}_{n}(x),x_{n})\to 0$ as $n\to +\infty$, which implies $\{x_{n}\}_{n\geq 0}$ is limit shadowed by $x\in X$. Consequently, any limit pseudo orbit of $\mathcal{F}$ can be limit shadowed by some point of $X$. Thus $\mathcal{F}$ also has the limit shadowing property.

Conversely, let $\mathcal{F}$ has the limit shadowing property, and let $\{x_{n}\}_{n\geq 0}$ be a limit pseudo orbit of $\mathcal{F}(k,\textnormal{shift})$, i.e. $d(f_{n+k+1}(x_{n}),x_{n+1})\to 0$ as $ n\to +\infty$. Then the sequence $\{y_{n}\}_{n\geq 0}$, where $y_{n+k}=x_{n}$ for $n\geq 0$ and $y_{1},y_{2},\ldots,y_{k-1}$ are arbitrary points of $X$, is a limit pseudo orbit of $\mathcal{F}$, i.e. $d(f_{n+1}(y_{n}),y_{n+1})\to 0$ as $ n\to +\infty$. Hence, there exists $y\in X$ such that $d(\mathcal{F}_{n}(y),y_{n})\to 0$ as $n\to +\infty$.
Now, put $x=f_{k}\circ\cdots\circ f_{2}\circ f_{1}(y)$. Then $d(\mathcal{F}_{n}(k,\textnormal{shift})(x),x_{n})\to 0$ as $n\to +\infty$, which implies $\{x_{n}\}_{n\geq 0}$ is limit shadowed by $x\in X$. Consequently, any limit pseudo orbit of $\mathcal{F}(k,\textnormal{shift})$ can be limit shadowed by some point of $X$. Thus $\mathcal{F}(k,\textnormal{shift})$ also has the limit shadowing property.
\end{proof}
%%%%%%%%%%%%%%%%%%%%%%%%%%%%%%%%%%%%%%%%%%%%%%%%%%%%
\section{Shadowing properties and expansivity}\label{section333}
%%%%%%%%%%%%%%%%%%%%%%%%%%%%%%%%%%%%%%%%%%%%%%%%%%%%
In this section, we investigate the relationships between various notions of shadowing for time varying maps and examine the role that expansivity plays in shadowing properties of such dynamical systems. We prove some results linking s-limit shadowing property to limit shadowing property, and h-shadowing property to s-limit shadowing and limit shadowing properties. Finally, under the assumption of expansivity, we show that the shadowing property implies the h-shadowing, s-limit shadowing and limit shadowing properties.
%%%%%%%%%%%%%%%%%%%%%%%%%%%%%%%%%%%%%%%%%%%%%%%%%%%%
\begin{lemma}\label{lemma1}
Let $\mathcal{F}=\{f_{n}\}_{n\in\mathbb{N}}$ be a time varying map on a metric space $(X,d)$ and $Y$ be a subset of $X$. If $Y\subseteq f_{n}(Y)$ for every $n\in\mathbb{N}$ and $\mathcal{F}$ has s-limit shadowing property on $Y$ then $\mathcal{F}$ also has limit shadowing property on $Y$. In particular, if $\mathcal{F}$ is a time varying map of surjective maps and has s-limit shadowing property then $\mathcal{F}$ also has limit shadowing property.
\end{lemma}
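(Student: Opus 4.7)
The obstacle I need to overcome is that the s-limit shadowing property has a bivalent hypothesis: the given sequence must be both a $\delta$-pseudo orbit \emph{and} a limit pseudo orbit. A limit pseudo orbit $\{x_n\}_{n\geq 0}\subseteq Y$ will eventually satisfy $d(f_{n+1}(x_n),x_{n+1})<\delta$, but its initial jumps can be arbitrarily large, so I cannot feed it directly into the s-limit shadowing definition. My plan is to modify only the initial segment of $\{x_n\}$, using the assumption $Y\subseteq f_n(Y)$ to pull back along the sequence of $f_n$, and then invoke s-limit shadowing on the modified sequence.

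Concretely, fix any $\varepsilon>0$ (say $\varepsilon=1$) and let $\delta>0$ be the constant provided by the s-limit shadowing property of $\mathcal{F}$ on $Y$. Since $\{x_n\}_{n\geq 0}$ is a limit pseudo orbit, choose $N$ such that $d(f_{n+1}(x_n),x_{n+1})<\delta$ for every $n\geq N$. Set $z_n=x_n$ for $n\geq N$, and then recursively build $z_{N-1},z_{N-2},\ldots,z_0\in Y$ by choosing $z_{j-1}\in Y$ with $f_j(z_{j-1})=z_j$ for $j=N,N-1,\ldots,1$; this is possible because $z_j\in Y\subseteq f_j(Y)$ at each step. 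The resulting sequence $\{z_n\}_{n\geq 0}$ lies in $Y$, satisfies $d(f_{n+1}(z_n),z_{n+1})=0$ for $0\leq n<N$ and $d(f_{n+1}(z_n),z_{n+1})=d(f_{n+1}(x_n),x_{n+1})<\delta$ for $n\geq N$, so it is a $\delta$-pseudo orbit of $\mathcal{F}$ in $Y$, and its tail coincides with that of $\{x_n\}$ so it is also a limit pseudo orbit.

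Now the s-limit shadowing property on $Y$ yields $x\in X$ with $d(\mathcal{F}_n(x),z_n)\to 0$ as $n\to +\infty$. Since $z_n=x_n$ for all $n\geq N$, this gives $d(\mathcal{F}_n(x),x_n)\to 0$ as $n\to +\infty$, so $\{x_n\}_{n\geq 0}$ is limit shadowed by $x$. Hence $\mathcal{F}$ has the limit shadowing property on $Y$. For the ``in particular'' clause, when each $f_n$ is surjective we take $Y=X$ and the hypothesis $Y\subseteq f_n(Y)$ becomes trivial, so the first part of the lemma applies.
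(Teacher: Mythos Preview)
Your proof is correct and follows essentially the same approach as the paper: both fix $\varepsilon>0$, obtain $\delta$ from s-limit shadowing, pass to a tail index $N$ where the jumps are below $\delta$, and use the hypothesis $Y\subseteq f_n(Y)$ to pull back $x_N$ through $f_N,\ldots,f_1$ to obtain an initial segment in $Y$ that is an exact orbit, yielding a $\delta$-pseudo (and limit pseudo) orbit in $Y$ whose s-limit shadow also limit shadows $\{x_n\}$. Your recursive construction of $z_{N-1},\ldots,z_0$ makes explicit exactly the step the paper summarizes in one line.
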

\begin{proof}
Fix $\varepsilon>0$ and let $\delta>0$ be given by the s-limit shadowing property.
Let $\{x_{n}\}_{n\geq 0}\subseteq Y$ be a limit pseudo orbit of $\mathcal{F}$, i.e. $d(f_{n+1}(x_{n}),x_{n+1})\to 0$ as $ n\to +\infty$. Then for some $n_{0}\in\mathbb{N}$,
$d(f_{n+1}(x_{n}),x_{n+1})<\delta$ for every $n\geq n_{0}$. By assumption $Y\subseteq f_{n}(Y)$ for every $n\in\mathbb{N}$, there is $y_{0}\in Y$ such that $\{y_{0},\mathcal{F}_{1}(y_{0}),\ldots,\mathcal{F}_{n-1}(y_{0})\}\subseteq Y$ and $\mathcal{F}_{n}(y_{0})=x_{n_{0}}$. Hence, the sequence $\{y_{0},\mathcal{F}_{1}(y_{0}),\ldots,\mathcal{F}_{n-1}(y_{0}),x_{n_{0}},x_{n_{0}+1},\ldots\}$ is $\delta$-pseudo orbit and limit pseudo orbit. By s-limit shadowing property, $\{y_{0},\mathcal{F}_{1}(y_{0}),\ldots,\mathcal{F}_{n-1}(y_{0}),x_{n_{0}},x_{n_{0}+1},\ldots\}$ is $\varepsilon$-shadowed and limit shadowed by some point $y\in X$. Therefore $\{x_{n}\}_{n\geq 0}$ is limit shadowed by $y$ which implies $\mathcal{F}$ has the limit shadowing property on $Y$.
\end{proof}
%%%%%%%%%%%%%%%%%%%%%%%%%%%%%%%%%%%%%%%%%%%%%%%%%%%%
\begin{theorem}\label{theorem22}
Let $\mathcal{F}=\{f_{n}\}_{n\in\mathbb{N}}$ be a time varying map on a compact metric space $(X,d)$ and $Y$ be a closed subset of $X$. Then, the following statements hold:
\begin{itemize}
\item[\textnormal{(a)}]
If there is an open set $U$ such that $Y\subseteq U$ and $\mathcal{F}$ has h-shadowing property on
$U$, then $\mathcal{F}$ has s-limit shadowing property on $Y$. If in addition, $Y\subseteq f_{n}(Y)$ for every $n\in\mathbb{N}$ then $\mathcal{F}$ has limit shadowing property on $Y$.
\item[\textnormal{(b)}]
If $Y$ is invariant and $\mathcal{F}|_{Y}$ has h-shadowing property then $\mathcal{F}|_{Y}$ has s-limit shadowing property and limit shadowing property.
\item[\textnormal{(c)}]
If $\mathcal{F}$ has h-shadowing property then $\mathcal{F}$ has s-limit shadowing property. If in addition, $\mathcal{F}$ is a time varying map of surjective maps then $\mathcal{F}$ has limit shadowing property.
\end{itemize}
\end{theorem}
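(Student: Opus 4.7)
The three parts follow a common skeleton: first prove the s-limit shadowing assertion from the h-shadowing hypothesis, and then deduce the limit shadowing assertion by invoking Lemma~\ref{lemma1}. The additional hypotheses that appear in the limit-shadowing halves---$Y\subseteq f_{n}(Y)$ in (a), invariance of $Y$ (which gives $f_{n}(Y)=Y$) in (b), and surjectivity of each $f_{n}$ (which gives $f_{n}(X)=X$) in (c)---are exactly what Lemma~\ref{lemma1} needs to upgrade s-limit shadowing to limit shadowing. So the technical heart of the theorem is the implication from h-shadowing to s-limit shadowing, which will be proved by the same iterative splicing construction in each of the three cases; they differ only in the ambient region where the construction must live.

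\emph{Setup for part (a).} Since $Y$ is closed in the compact space $X$ and $Y\subseteq U$ with $U$ open, there exists $\varepsilon^{*}>0$ such that the $\varepsilon^{*}$-neighborhood of $Y$ lies in $U$. Given $\varepsilon>0$, shrink it if necessary so that $\varepsilon<\varepsilon^{*}$, set $\varepsilon_{k}:=\varepsilon/2^{k+2}$ (so $\sum_{k\ge0}\varepsilon_{k}<\varepsilon/2$), and let $\delta_{k}>0$ be the h-shadowing constant on $U$ corresponding to $\varepsilon_{k}$, chosen decreasing in $k$; put $\delta:=\delta_{0}$. For any $\delta$-pseudo orbit $\{x_{n}\}_{n\ge0}\subseteq Y$ that is also a limit pseudo orbit, choose $0=N_{0}<N_{1}<N_{2}<\cdots$ such that $d(f_{n+1}(x_{n}),x_{n+1})<\delta_{k}$ for all $n\ge N_{k}$.

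\emph{Iterative splicing.} Let $z^{(0)}\in X$ be produced by h-shadowing applied to the finite $\delta_{0}$-pseudo orbit $\{x_{0},\dots,x_{N_{1}}\}$, so that $d(\mathcal{F}_{n}(z^{(0)}),x_{n})<\varepsilon_{0}$ for $0\le n<N_{1}$ and $\mathcal{F}_{N_{1}}(z^{(0)})=x_{N_{1}}$. Inductively, given $z^{(k-1)}$, define
\begin{equation*}
y_{n}^{(k)}:=\mathcal{F}_{n}(z^{(k-1)})\ \text{for}\ 0\le n<N_{k},\qquad y_{n}^{(k)}:=x_{n}\ \text{for}\ n\ge N_{k}.
\end{equation*}
The exact-hit identity $\mathcal{F}_{N_{k}}(z^{(k-1)})=x_{N_{k}}$ makes the pseudo-orbit error at the splice point vanish, so $\{y_{n}^{(k)}\}$ is a $\delta_{k}$-pseudo orbit; a straightforward triangle-inequality estimate shows that $d(y_{n}^{(k)},x_{n})<\sum_{l<k}\varepsilon_{l}<\varepsilon^{*}$ for every $n$, placing the splice inside $U$. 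Applying h-shadowing on $U$ to $\{y_{n}^{(k)}\}_{0\le n\le N_{k+1}}$ at parameter $\varepsilon_{k}$ then produces $z^{(k)}\in X$ with $d(\mathcal{F}_{n}(z^{(k)}),y_{n}^{(k)})<\varepsilon_{k}$ for $n<N_{k+1}$ and $\mathcal{F}_{N_{k+1}}(z^{(k)})=x_{N_{k+1}}$.

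\emph{Extracting the shadowing point.} By compactness, pass to a subsequence $z^{(k_{j})}\to x^{*}\in X$. A short induction on $k$ yields the telescoping bound $d(\mathcal{F}_{n}(z^{(k)}),x_{n})\le\sum_{l=j}^{k}\varepsilon_{l}$ whenever $n\in[N_{j},N_{j+1})$ and $k\ge j$; taking the limit along the subsequence and using continuity of $\mathcal{F}_{n}$ gives
\begin{equation*}
d(\mathcal{F}_{n}(x^{*}),x_{n})\le\sum_{l\ge j(n)}\varepsilon_{l},
\end{equation*}
where $j(n)$ is defined by $n\in[N_{j(n)},N_{j(n)+1})$. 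The right-hand side is at most $\sum_{l\ge0}\varepsilon_{l}<\varepsilon$ for every $n$ (giving the $\varepsilon$-shadowing clause) and tends to $0$ as $n\to\infty$ since $j(n)\to\infty$ (giving the limit-shadowing clause), establishing s-limit shadowing on $Y$. Parts (b) and (c) use the same construction performed inside $Y$ (where invariance keeps the splices in $Y$) or inside $X$; in these settings the $\varepsilon^{*}$-buffer is superfluous. The main obstacle is the splicing step itself: one must verify simultaneously that each auxiliary sequence $\{y_{n}^{(k)}\}$ is a genuine $\delta_{k}$-pseudo orbit and remains inside the region where h-shadowing applies. The exact-hit property at each $N_{k}$ combined with the open-neighborhood buffer (respectively invariance, respectively the full space) are precisely what make the splicing work cleanly across all three parts.
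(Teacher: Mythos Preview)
Your approach is essentially the paper's own: the same buffer $B(Y,\varepsilon^{*})\subseteq U$, the same geometric sequence $\varepsilon_{k}=2^{-k-2}\varepsilon$, the same iterative splicing that uses the exact-hit at $N_{k}$ to make the joined sequence a genuine $\delta_{k}$-pseudo orbit, the same compactness extraction of a limit point, and the same telescoping estimate $d(\mathcal{F}_{n}(z^{(k)}),x_{n})\le\sum_{l=j}^{k}\varepsilon_{l}$. Parts (b) and (c) are likewise deduced from (a) and Lemma~\ref{lemma1}, exactly as in the paper.

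There is one small omission. The definition of s-limit shadowing has two clauses: clause~(i) requires that \emph{every} $\delta$-pseudo orbit in $Y$ be $\varepsilon$-shadowed, not only those that happen also to be limit pseudo orbits. Your construction is only carried out for a $\delta$-pseudo orbit that is simultaneously a limit pseudo orbit, so as written you have not verified clause~(i) for an arbitrary $\delta$-pseudo orbit. The paper handles this up front by invoking Remark~\ref{remark1} (h-shadowing on a compact space implies shadowing), which disposes of clause~(i) immediately; you should add the same one-line observation before restricting attention to limit $\delta$-pseudo orbits.
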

\begin{proof}
\textnormal{(a)}. Since $X$ is compact, then by remark \ref{remark1} every time varying map with h-shadowing property has shadowing property. Hence the first half of the definition of s-limit shadowing property is satisfied trivially.

So fix $\varepsilon>0$ such that $B(Y,3\varepsilon)\subseteq U$ and denote $\varepsilon_{n}=2^{-n-2}\varepsilon$ for every $n\in\mathbb{N}\cup\{0\}$ (note that $B(Y,r)$ is the $r$-neighborhood of the set $Y$). By the definition of h-shadowing property there are $\{\delta_{n}>0\}_{n\in\mathbb{N}\cup\{0\}}$ such that every finite $\delta_{n}$-pseudo orbit in $U$ is $\varepsilon_{n}$-shadowed by some point of $X$ (with exact hit at the end). Fix any $\delta_{0}$-pseudo orbit $\{x_{n}\}_{n\geq 0}\subseteq Y$ such that $d(f_{n+1}(x_{n}),x_{n+1})\to 0$ as $ n\to +\infty$. There is an increasing sequence $\{k_{i}\}_{i\in\mathbb{N}\cup\{0\}}$ such that $\{x_{n}\}_{n\geq k_{i}}$ is a $\delta_{i}$-pseudo orbit for $\mathcal{F}(k_{i},\textnormal{shift})=\{f_{n}\}_{n=k_{i}+1}^{\infty}$ and obviously $k_{0}=0$. Note that if $w$ is a point such that $\mathcal{F}_{k_{i}}(w)=x_{k_{i}}$ then
the sequence
\begin{center}
$\{w,\mathcal{F}_{1}(w),\ldots,\mathcal{F}_{k_{i}}(w),x_{k_{i}+1},\ldots,x_{k_{i+1}}\}$
\end{center}
is a finite $\delta_{i}$-pseudo orbit. Let $z_{0}$ be a point which $\varepsilon_{0}$-shadows the finite $\delta_{0}$-pseudo orbit $\{x_{0},\ldots,x_{k_{1}}\}$ with exact hit at the end, i.e. $\mathcal{F}_{k_{1}}(z_{0})=x_{k_{1}}$. Note that $\mathcal{F}_{j}(z_{0})\in U$ for $0\leq j\leq k_{1}$.
 
For $i\in\mathbb{N}$, assume that $z_{i}$ is a point which $\varepsilon_{i}$-shadows the finite $\delta_{i}$-pseudo orbit
\begin{center}
$\{z_{i-1},\mathcal{F}_{1}(z_{i-1}),\ldots,\mathcal{F}_{k_{i}}(z_{i-1}),x_{k_{i}+1},\ldots,x_{k_{i+1}}\}\subseteq U$
\end{center}
with exact hit at the end. Then by h-shadowing property there is a point $z_{i+1}$ which $\varepsilon_{i+1}$-shadows the finite $\delta_{i+1}$-pseudo orbit
\begin{center}
$\{z_{i},\mathcal{F}_{1}(z_{i}),\ldots,\mathcal{F}_{k_{i+1}}(z_{i}),x_{k_{i+1}+1},\ldots,x_{k_{i+2}}\}\subseteq U$
\end{center}
with exact hit at the end. Thus we can produce a sequence $\{z_{i}\}_{i\geq 0}$ with the following
properties:
\begin{itemize}
\item[\textnormal{(1)}]
$d(\mathcal{F}_{j}(z_{i-1}),\mathcal{F}_{j}(z_{i}))<\varepsilon_{i}$ for $0\leq j\leq k_{i}$ and $i\geq 1$;
\item[\textnormal{(2)}]
$d(\mathcal{F}_{j}(z_{i}),x_{j})<\varepsilon_{i}$ for $k_{i}<j\leq k_{i+1}$ and $i\geq 0$;
\item[\textnormal{(3)}]
$\mathcal{F}_{k_{i+1}}(z_{i})=x_{k_{i+1}}$ for $i\geq 0$;
\item[\textnormal{(4)}]
$d(\mathcal{F}_{j}(z_{i}),Y)<\varepsilon$ for $j\leq k_{i+1}$.
\end{itemize}
Since $X$ is compact, there is an increasing sequence $\{s_{i}\}_{i\geq 1}$ such that the limit $z=\lim_{i\to\infty}z_{s_{i}}$ exists. Hence, for any $j,n\in\mathbb{N}$ there exist $i_{0}\geq 0$ and $m\geq i_{0}$ such that $k_{i_{0}}<j\leq k_{i_{0}+1}$
and $d(\mathcal{F}_{j}(z),\mathcal{F}_{j}(z_{s_{m}}))<\varepsilon_{n+1}$. So we get
\begin{eqnarray*}
d(\mathcal{F}_{j}(z),x_{j})
&\leq& d(\mathcal{F}_{j}(z),\mathcal{F}_{j}(z_{s_{m}}))
+d(\mathcal{F}_{j}(z_{i_{0}}),x_{j})
+\sum_{i=i_{0}}^{s_{m}-1}d(\mathcal{F}_{j}(z_{i}),\mathcal{F}_{j}(z_{i+1}))\\
&<& \varepsilon_{n+1}+\varepsilon_{i_{0}}+\sum_{i=i_{0}}^{s_{m}-1}\varepsilon_{i+1}
<2^{-n-3}\varepsilon+\sum_{i=i_{0}}^{\infty}2^{-i-2}\varepsilon\\
&=& \varepsilon(2^{-n-3}+2^{-i_{0}-1})<\varepsilon.
\end{eqnarray*}
Furthermore, for any $n$, let $j>k_{n+2}$. Then, there is $i_{1}\geq n+2$ such that $k_{i_{1}}<j\leq k_{i_{1}+1}$
and there is $m>i_{1}$ such that $d(\mathcal{F}_{j}(z),\mathcal{F}_{j}(z_{s_{m}}))<\varepsilon_{n+1}$. Hence, as
before we obtain
\begin{center}
$d(\mathcal{F}_{j}(z),x_{j})<\varepsilon(2^{-n-3}+2^{-i_{1}-1})\leq\varepsilon(2^{-n-3}+
2^{-n-3})=\varepsilon_{n}.$
\end{center}
This immediately implies that $\limsup_{j\to\infty}d(\mathcal{F}_{j}(z),x_{j})\leq\varepsilon_{n}$. Since $n$
was arbitrary, we have $\lim_{j\to\infty}d(\mathcal{F}_{j}(z),x_{j})=0$. This shows that $\mathcal{F}$ has
s-limit shadowing property on $Y$.

Finally, (b) and (c) follow directly from (a) and lemma \ref{lemma1} (since $U=Y$ is open in $Y$), which
completes the proof of the theorem. 
\end{proof}
%%%%%%%%%%%%%%%%%%%%%%%%%%%%%%%%%%%%%%%%%%%%%%%%%%%%
\begin{definition}[Expansivity]
An time varying map $\mathcal{F}=\{f_{n}\}_{n\in\mathbb{N}}$ on a metric space $(X,d)$ is called \emph{strongly expansive} if there exists $\gamma>0$ (called expansivity constant) such that for any two distinct points $x,y\in X$ and every $N\in\mathbb{N}$, $d(\mathcal{F}_{[N,n]}(x),\mathcal{F}_{[N, n]}(y))>\gamma$ for some $n\geq N$. Equivalently, if for $x,y\in X$ and some $N\in\mathbb{N}$,
$d(\mathcal{F}_{[N,n]}(x),\mathcal{F}_{[N, n]}(y))\leq\gamma$ for all $n\geq N$, then $x=y$.
\end{definition}
%%%%%%%%%%%%%%%%%%%%%%%%%%%%%%%%%%%%%%%%%%%%%%%%%%%%
\begin{corollary}
Let $\mathcal{F}=\{f_{n}\}_{n\in\mathbb{N}}$ be a time varying map on a compact metric space $(X,d)$. 
\begin{itemize}
\item[\textnormal{(a)}]
If $\mathcal{F}$ is strongly expansive then $\mathcal{F}$ has the shadowing property if and only if $\mathcal{F}$ has the h-shadowing property.
\item[\textnormal{(b)}]
If $\mathcal{F}$ is strongly expansive and has the shadowing property then $\mathcal{F}$ has the h-shadowing and s-limit shadowing properties. If in addition, $\mathcal{F}$ is a time varying map of surjective maps then $\mathcal{F}$ has the limit shadowing property.
\end{itemize}
\end{corollary}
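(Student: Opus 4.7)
The plan is to handle part (a) first, since part (b) will then be essentially a corollary of part (a) combined with the earlier Theorem \ref{theorem22}. For part (a), the implication that h-shadowing implies shadowing is automatic on a compact space by Remark \ref{remark1}, so the content lies in the converse: shadowing plus strong expansivity yields h-shadowing. Given $\varepsilon>0$, let $\gamma$ be an expansivity constant for $\mathcal{F}$, set $\varepsilon' := \min\{\varepsilon,\gamma\}$, and apply the shadowing property with tolerance $\varepsilon'$ to obtain some $\delta>0$. I will use this $\delta$ as the h-shadowing constant.

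Now fix any finite $\delta$-pseudo orbit $\{x_0,x_1,\ldots,x_m\}\subseteq X$. The key trick is to extend it into a genuine $\delta$-pseudo orbit over all of $\mathbb{N}$ by appending the exact $\mathcal{F}$-orbit of $x_m$: define $x_{n} := \mathcal{F}_{[m+1,n]}(x_m)$ for $n>m$. The appended part satisfies $f_{n+1}(x_n)=x_{n+1}$ for all $n\ge m$, so the jumps remain below $\delta$. By shadowing, there exists $x\in X$ with $d(\mathcal{F}_n(x),x_n)<\varepsilon'\le\gamma$ for every $n\ge 0$. Using the cocycle identity $\mathcal{F}_n = \mathcal{F}_{[m+1,n]}\circ\mathcal{F}_m$ for $n\ge m$, and the defining identity $x_n = \mathcal{F}_{[m+1,n]}(x_m)$ on the tail, we obtain
\[
d\bigl(\mathcal{F}_{[m+1,n]}(\mathcal{F}_m(x)),\,\mathcal{F}_{[m+1,n]}(x_m)\bigr) = d(\mathcal{F}_n(x),x_n) \le \gamma \qquad \text{for all } n\ge m+1.
\]
Strong expansivity applied with $N=m+1$ to the pair $\mathcal{F}_m(x),\,x_m$ then forces $\mathcal{F}_m(x)=x_m$, while the earlier indices $0\le n<m$ still enjoy $d(\mathcal{F}_n(x),x_n)<\varepsilon'\le\varepsilon$. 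This is precisely h-shadowing.

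Part (b) is then immediate. Strong expansivity plus shadowing gives h-shadowing by part (a); h-shadowing yields s-limit shadowing by Theorem \ref{theorem22}(c), and in the additional surjective case the same theorem delivers the limit shadowing property. The only real obstacle in the whole argument is the one in part (a): choosing the right extension of the finite pseudo orbit so that strong expansivity can be applied to pin down $\mathcal{F}_m(x)$ exactly to $x_m$. Extending by the actual forward $\mathcal{F}$-orbit of $x_m$ is essential, because it ensures the shadow estimate remains valid along the whole tail $n\ge m+1$, which is exactly what expansivity requires. Shrinking $\varepsilon$ below $\gamma$ before invoking shadowing is the small but indispensable technical adjustment that makes the expansivity inequality usable.
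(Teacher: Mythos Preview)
Your proof is correct and follows essentially the same approach as the paper's own proof: extending the finite $\delta$-pseudo orbit by the true forward orbit of $x_m$, shadowing the extended orbit, and then invoking strong expansivity (with $N=m+1$) to force $\mathcal{F}_m(x)=x_m$; part (b) is likewise deduced from part (a) together with Theorem~\ref{theorem22}. The only cosmetic difference is that the paper assumes $\varepsilon<\gamma$ from the outset rather than passing to $\varepsilon'=\min\{\varepsilon,\gamma\}$.
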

\begin{proof}
If $\mathcal{F}$ has the h-shadowing property then $\mathcal{F}$ has the shadowing property (see Remark \ref{remark1}). So suppose that $\mathcal{F}$ has the shadowing property. Let $\varepsilon<\gamma$ and let $\delta>0$ be provided by shadowing property for $\varepsilon$, where $\gamma$ is the expansivity constant. Fix any finite $\delta$-pseudo orbit $\{x_{0},x_{1},\ldots,x_{m}\}$ and extend it to the infinite $\delta$-pseudo orbit
\begin{center}
$$\{x_{0},x_{1},\ldots,x_{m},\mathcal{F}_{[m+1,m+1]}(x_{m}),\mathcal{F}_{[m+1,m+2]}(x_{m}),\mathcal{F}_{[m+1,m+3]}(x_{m}),\ldots\}.$$
\end{center}

If $x$ is a point which $\varepsilon$-shadows the above $\delta$-pseudo orbit, then
\begin{center}
$d(\mathcal{F}_{[m+1,m+j]}(\mathcal{F}_{m}(x)),\mathcal{F}_{[m+1,m+j]}(x_{m}))<\varepsilon<\gamma$
\end{center}
for all $j\geq 0$ which implies that $\mathcal{F}_{m}(x)=x_{m}$. Thus $\mathcal{F}$ has the h-shadowing property. Finally, (b) is a direct consequence of part (a) and Theorem \ref{theorem22}, which completes the proof.
\end{proof}
%%%%%%%%%%%%%%%%%%%%%%%%%%%%%%%%%%%%%%%%%%%%%%%%%%%%
\section{Uniformly contracting and uniformly expanding time varying maps}\label{section4}
%%%%%%%%%%%%%%%%%%%%%%%%%%%%%%%%%%%%%%%%%%%%%%%%%%%%
In this section, we investigate various notions of shadowing for uniformly contracting and uniformly expanding time varying maps. We show that the uniformly contracting and uniformly expanding time varying maps exhibit the shadowing, limit shadowing, s-limit shadowing and exponential limit shadowing properties. Moreover, we show that any time varying map of a finite set of hyperbolic linear homeomorphisms on a Banach space with the same stable and unstable subspaces has the shadowing, limit shadowing, s-limit shadowing and exponential limit shadowing properties.
%%%%%%%%%%%%%%%%%%%%%%%%%%%%%%%%%%%%%%%%%%%%%%%%%%%%
\begin{definition}[Uniformly contracting and uniformly expanding time varying map]
Let $\mathcal{F}=\{f_{n}\}_{n\in\mathbb{N}}$ be a time varying map on a metric space $(X,d)$. Then,
\begin{enumerate}
\item
the time varying map $\mathcal{F}$ is \emph{uniformly contracting} if its contracting ratio which denoted by $\alpha$ exists and is less than one, where
\begin{equation*}
\alpha:=\sup_{n\in\mathbb{N}}\sup_{\substack{x,y\in X \\
x\neq y}}\dfrac{d(f_{n}(x),f_{n}(y))}{d(x,y)};
\end{equation*}
\item
the time varying map $\mathcal{F}$ is \emph{uniformly expanding} if its expanding ratio which denoted by $\beta$
exists and is greater than one, where
\begin{equation*}
\beta:=\inf_{n\in\mathbb{N}}\inf_{\substack{x,y\in X \\
x\neq y}}\dfrac{d(f_{n}(x),f_{n}(y))}{d(x,y)}.
\end{equation*}
\end{enumerate}
\end{definition}
%%%%%%%%%%%%%%%%%%%%%%%%%%%%%%%%%%%%%%%%%%%%%%%%%%%%
In the following theorem, we show that uniformly contracting time varying maps exhibit the shadowing, limit shadowing, s-limit shadowing and exponential limit shadowing properties.
%%%%%%%%%%%%%%%%%%%%%%%%%%%%%%%%%%%%%%%%%%%%%%%%%%%%
\begin{theorem}\label{contracting}
Let $\mathcal{F}=\{f_{n}\}_{n\in\mathbb{N}}$ be a uniformly contracting time varying map on a metric space $(X,d)$. Then,
\begin{itemize}
\item[\textnormal{(a)}]
$\mathcal{F}$ has the shadowing property;
\item[\textnormal{(b)}]
$\mathcal{F}$ has the limit shadowing property;
\item[\textnormal{(c)}]
$\mathcal{F}$ has the exponential limit shadowing property;
\item[\textnormal{(d)}]
$\mathcal{F}$ has the s-limit shadowing property.
\end{itemize}
\end{theorem}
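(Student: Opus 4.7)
The plan is to use a single universal shadowing candidate, namely the initial point $x=x_{0}$ of the pseudo-orbit itself, for all four parts. The intuition is that uniform contraction causes accumulated errors to decay geometrically, so the trajectory of $x_{0}$ stays close to any reasonable pseudo-orbit starting at $x_{0}$. Concretely, I would first establish the master inequality: for any sequence $\{x_{n}\}_{n\geq 0}\subseteq X$, setting $a_{n}:=d(\mathcal{F}_{n}(x_{0}),x_{n})$ and $b_{n}:=d(f_{n}(x_{n-1}),x_{n})$ for $n\geq 1$, the contraction property gives
\begin{equation*}
a_{n}\leq d(f_{n}(\mathcal{F}_{n-1}(x_{0})),f_{n}(x_{n-1}))+d(f_{n}(x_{n-1}),x_{n})\leq\alpha a_{n-1}+b_{n},
\end{equation*}
so that $a_{0}=0$ and, by induction, $a_{n}\leq\sum_{k=1}^{n}\alpha^{n-k}b_{k}$. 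Each of the four parts is then obtained by bounding this sum under the appropriate hypothesis on $\{b_{k}\}$.

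For part (a), given $\varepsilon>0$ set $\delta:=\varepsilon(1-\alpha)$. If $\{x_{n}\}_{n\geq 0}$ is a $\delta$-pseudo orbit, then $b_{k}<\delta$ for every $k$, so $a_{n}\leq\delta\sum_{k=0}^{\infty}\alpha^{k}=\delta/(1-\alpha)=\varepsilon$, and $x_{0}$ is an $\varepsilon$-shadowing point. For part (b), suppose $b_{k}\to 0$. Given $\eta>0$, pick $N$ with $b_{k}<\eta(1-\alpha)$ for $k>N$. Splitting the sum,
\begin{equation*}
a_{n}\leq\alpha^{n-N}\sum_{k=1}^{N}\alpha^{N-k}b_{k}+\sum_{k=N+1}^{n}\alpha^{n-k}b_{k}<\alpha^{n-N}\Big(\sum_{k=1}^{N}\alpha^{N-k}b_{k}\Big)+\eta.
\end{equation*}
The first term tends to $0$ as $n\to\infty$, so $\limsup a_{n}\leq\eta$; since $\eta$ was arbitrary, $a_{n}\to 0$.

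For part (c), choose $\theta_{0}:=\alpha$, and let $\{x_{n}\}_{n\geq 0}$ be a $\theta$-exponentially limit pseudo orbit with $\theta\in(\alpha,1)$, so that $b_{k}\leq L\theta^{k}$ for some $L>0$. Then
\begin{equation*}
a_{n}\leq L\sum_{k=1}^{n}\alpha^{n-k}\theta^{k}=L\theta^{n}\sum_{k=1}^{n}(\alpha/\theta)^{n-k}\leq\frac{L\theta^{n}}{1-\alpha/\theta}=\frac{L\theta}{\theta-\alpha}\,\theta^{n},
\end{equation*}
which gives $a_{n}\xrightarrow{\theta}0$, i.e.\ exponential limit shadowing with exponent $\xi=1$. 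For part (d), the same choice $\delta=\varepsilon(1-\alpha)$ and candidate $x=x_{0}$ works simultaneously for both clauses of s-limit shadowing: clause (1) is exactly part (a), and clause (2) follows from part (b) applied to the same $x_{0}$ when the pseudo orbit is additionally a limit pseudo orbit. Alternatively, (d) can be quoted as a direct consequence of (a) and (b).

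There is essentially no serious obstacle here; the argument is driven entirely by the geometric-series bookkeeping in the master inequality. The only place requiring a little care is part (c), where one must choose $\theta_{0}$ so that the ratio $\alpha/\theta$ is strictly less than $1$ (hence $\theta_{0}=\alpha$), in order to sum the resulting geometric series and extract the constant $L\theta/(\theta-\alpha)$ that witnesses the exponential rate.
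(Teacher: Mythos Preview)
Your proposal is correct and follows essentially the same approach as the paper: both arguments rest on the recursive estimate $d(\mathcal{F}_{n}(x_{0}),x_{n})\leq\alpha\,d(\mathcal{F}_{n-1}(x_{0}),x_{n-1})+b_{n}$ and then control the resulting geometric sum $\sum_{k}\alpha^{n-k}b_{k}$ under each hypothesis, with part~(d) deduced from (a) and (b). The only cosmetic differences are that the paper, in part~(a), allows any starting point $x$ with $d(x,x_{0})\leq\varepsilon/2$ (and correspondingly takes $\delta=(1-\alpha)\varepsilon/2$) rather than committing to $x=x_{0}$, and in part~(c) it picks $\theta_{0}\in(\alpha,1)$ rather than $\theta_{0}=\alpha$; neither choice affects the substance of the argument.
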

\begin{proof}
Assume that the time varying map $\mathcal{F}$ is uniformly contracting with the contracting ratio $\alpha$.

$\textnormal{(a)}$. Given $\varepsilon>0$ take $\delta=(1-\alpha)\frac{\varepsilon}{2}\leq\frac{\varepsilon}{2}$, and let $\{x_{n}\}_{n\geq 0}$ be a $\delta$-pseudo orbit of $\mathcal{F}$, i.e. $d(f_{n+1}(x_{n}),x_{n+1})<\delta$
for all $n\geq 0$. Consider a point $x\in X$ with $d(x,x_{0})\leq\frac{\varepsilon}{2}$. We show that the
$\delta$-pseudo orbit $\{x_{n}\}_{n\geq 0}$ is $\varepsilon$-shadowed by $x$, i.e. $d(\mathcal{F}_{n}(x),x_{n})<\varepsilon$ for all $n\geq 0$.
Observe that $d(\mathcal{F}_{0}(x),x_{0})\leq\frac{\varepsilon}{2}$ and
\begin{equation*}
d(\mathcal{F}_{1}(x),x_{1})\leq d(\mathcal{F}_{1}(x),f_{1}(x_{0}))+d(f_{1}(x_{0}),x_{1})\leq\alpha d(x,x_{0})+\delta.
\end{equation*}
Similarly,
\begin{eqnarray*}
d(\mathcal{F}_{2}(x),x_{2})
&\leq & d(\mathcal{F}_{2}(x),f_{2}(x_{1}))+d(f_{2}(x_{1}),x_{2})\\
&=& d(f_{2}(\mathcal{F}_{1}(x)),f_{2}(x_{1}))+d(f_{2}(x_{1}),x_{2})\\
&\leq & \alpha d(\mathcal{F}_{1}(x),x_{1})+\delta\\
&\leq & \alpha^{2} d(x,x_{0})+\delta\alpha+\delta.
\end{eqnarray*}
By induction, for each $n\geq 0$, one can show that
\begin{equation*}
d(\mathcal{F}_{n}(x),x_{n})\leq\alpha^{n} d(x,x_{0})+\delta(\alpha^{n-1}+\alpha^{n-2}+\cdots+\alpha+1).
\end{equation*}
Now, the last inequality together with $d(x,x_{0})\leq\frac{\varepsilon}{2}$ imply
\begin{equation*}
d(\mathcal{F}_{n}(x),x_{n})\leq d(x,x_{0})+\delta(\dfrac{1}{1-\alpha})\leq\dfrac{\varepsilon}{2}+\dfrac{\varepsilon}{2}=\varepsilon.
\end{equation*}
Hence, the $\delta$-pseudo orbit $\{x_{n}\}_{n\geq 0}$ is $\varepsilon$-shadowed by $x$ and so time varying map $\mathcal{F}$ has the shadowing property, which completes the proof of part $\textnormal{(a)}$.

$\textnormal{(b)}$. Let $\{x_{n}\}_{n\geq 0}$ be a limit pseudo orbit of $\mathcal{F}$, i.e. $d(f_{n+1}(x_{n}),x_{n+1})\to 0$ as $n\to +\infty$. Put $\tau_{n}=d(f_{n+1}(x_{n}),x_{n+1})$ for all $n\geq 0$ (note that $\tau_{n}\to 0$ as $n\to +\infty$). Now, we show that $d(\mathcal{F}_{n}(x_{0}),x_{n})\to 0$ as $n\to +\infty$, which implies $\{x_{n}\}_{n\geq 0}$ is limit shadowed by $x_{0}$.
To prove this, suppose $\varepsilon$ is an arbitrary positive real number and $M=\sup_{n\geq 0}\tau_{n}$. We can find $k\in\mathbb{N}$ such
that $M\dfrac{\alpha^{k}}{1-\alpha}<\dfrac{\varepsilon}{2}$ and $\tau_{i}<\varepsilon\dfrac{1-\alpha}{2}$ for all $i\geq k$. Obviously, $d(\mathcal{F}_{0}(x_{0}),x_{0})=0$ and
\begin{equation*}
d(\mathcal{F}_{1}(x_{0}),x_{1})\leq d(\mathcal{F}_{1}(x_{0}),f_{1}(x_{0}))+d(f_{1}(x_{0}),x_{1})=d(f_{1}(x_{0}),f_{1}(x_{0}))+\tau_{0}=\tau_{0}.
\end{equation*}
Similarly,
\begin{eqnarray*}
d(\mathcal{F}_{2}(x_{0}),x_{2})
&\leq & d(\mathcal{F}_{2}(x_{0}),f_{2}(x_{1}))+d(f_{2}(x_{1}),x_{2})\\
&=& d(f_{2}(\mathcal{F}_{1}(x_{0})),f_{2}(x_{1}))+d(f_{2}(x_{1}),x_{2})\\
&\leq & \alpha d(\mathcal{F}_{1}(x_{0}),x_{1})+\tau_{1}\\
&\leq & \alpha\tau_{0}+\tau_{1},
\end{eqnarray*}
and
\begin{eqnarray*}
d(\mathcal{F}_{3}(x_{0}),x_{3})
&\leq & d(\mathcal{F}_{3}(x_{0}),f_{3}(x_{2}))+d(f_{3}(x_{2}),x_{3})\\
&=& d(f_{3}(\mathcal{F}_{2}(x_{0})),f_{3}(x_{2}))+d(f_{3}(x_{2}),x_{3})\\
&\leq & \alpha d(\mathcal{F}_{2}(x_{0}),x_{2})+\tau_{2}\\
&\leq & \alpha(\alpha\tau_{0}+\tau_{1})+\tau_{2}\\
&=& \alpha^{2}\tau_{0}+\alpha\tau_{1}+\tau_{2}.
\end{eqnarray*}
By induction, for each $n\geq 0$ we have that
\begin{equation*}
d(\mathcal{F}_{n}(x_{0}),x_{n})\leq\alpha^{n-1}\tau_{0}+\alpha^{n-2}\tau_{1}+\cdots+\alpha\tau_{n-2}+\tau_{n-1}.
\end{equation*}
Hence for $n\geq k$, we have
\begin{eqnarray*}
d(\mathcal{F}_{n}(x_{0}),x_{n})
&\leq & \alpha^{n-1}\tau_{0}+\alpha^{n-2}\tau_{1}+\cdots+\alpha^{k+1}\tau_{n-(k+2)}+\alpha^{k}\tau_{n-(k+1)}\\
&& +\alpha^{k-1}\tau_{n-k}+\cdots+\alpha\tau_{n-2}+\tau_{n-1}\\
&\leq & M\alpha^{k}(1+\alpha+\cdots+\alpha^{n-k-1})+\varepsilon\dfrac{1-\alpha}{2}(1+\alpha+\cdots+\alpha^{k-1})\\
&=& M\alpha^{k}\dfrac{\alpha^{n-k}}{1-\alpha}+\varepsilon\dfrac{1-\alpha}{2}\dfrac{\alpha^{k}}{1-\alpha}\\
&\leq & \dfrac{\varepsilon}{2}+\dfrac{\varepsilon}{2}=\varepsilon.
\end{eqnarray*}
Therefore $d(\mathcal{F}_{n}(x_{0}),x_{n})\leq\varepsilon$ as $n\to +\infty$. Since $\varepsilon>0$ was arbitrary, we conclude that $d(\mathcal{F}_{n}(x_{0}),x_{n})\to 0$ as $n\to +\infty$. Hence, time varying map $\mathcal{F}$ has the limit shadowing property, which completes the proof of part $\textnormal{(b)}$.

$\textnormal{(c)}$. We choose $\theta_{0}\in(\alpha,1)$ and show that $\mathcal{F}$ has the exponential limit shadowing property with respect to this $\theta_{0}$. Let $\{x_{n}\}_{n\geq 0}$ be a $\theta$-exponentially limit pseudo orbit of $\mathcal{F}$ with $\theta\in(\theta_{0},1)$, i.e.
$d(f_{n+1}(x_{n}),x_{n+1})\xrightarrow{\theta} 0$ as $ n\to +\infty$. Then, there exists $L>0$ such that
$d(f_{n+1}(x_{n}),x_{n+1})\leq L\theta^{n}$ for all $n\geq 0$. Hence,

\begin{eqnarray*}
d(\mathcal{F}_{n}(x_{0}),x_{n})
&\leq & d(\mathcal{F}_{n}(x_{0}),f_{n}(x_{n-1}))+d(f_{n}(x_{n-1}),x_{n})\\
&=& d(f_{n}(\mathcal{F}_{n-1}(x_{0})),f_{n}(x_{n-1}))+d(f_{n}(x_{n-1}),x_{n})\\
&\leq & \alpha d(\mathcal{F}_{n-1}(x_{0}),x_{n-1})+L\theta^{n}\\
&\leq & \alpha^{2}d(\mathcal{F}_{n-2}(x_{0}),x_{n-2})+\alpha L\theta^{n-1}+L\theta^{n}\\
&\vdots &\\
&\leq & \alpha^{n-1}L\theta+\alpha^{n-2}L\theta^{2}+\cdots+\alpha^{2}L\theta^{n-2}+\alpha L\theta^{n-1}+L\theta^{n}\\
&= & L(1+\alpha\theta^{-1}+\alpha^{2}\theta^{-2}+\cdots+\alpha^{n-1}\theta^{-n+1})\theta^{n}\\
&\leq & L(1+\alpha\theta^{-1}+\alpha^{2}\theta^{-2}+\cdots+\alpha^{n-1}\theta^{-n+1})\theta^{n-1}\\
&\leq & (\dfrac{L}{\theta-\alpha})\theta^{n}.
\end{eqnarray*}
Therefore $d(\mathcal{F}_{n}(x_{0}),x_{n})\xrightarrow{\theta} 0$, as $n\to +\infty$, i.e. time varying map $\mathcal{F}$ has the exponential limit shadowing property, which completes the proof of part $\textnormal{(c)}$.

Finally, the part $\textnormal{(d)}$ is a direct consequence of our process in parts $\textnormal{(a)}$ and $\textnormal{(b)}$, which completes the proof of the theorem.
\end{proof}
%%%%%%%%%%%%%%%%%%%%%%%%%%%%%%%%%%%%%%%%%%%%%%%%%%%%
\begin{remark}
In general a time varying map with shadowing and limit shadowing properties does not have s-limit shadowing property, see example \ref{example1}.
\end{remark}
%%%%%%%%%%%%%%%%%%%%%%%%%%%%%%%%%%%%%%%%%%%%%%%%%%%%
\begin{corollary}
Let $I$ be a non-empty finite set and for every $i\in I$, $f_{i}:\mathbb{R}\to\mathbb{R}$ be a differentiable function.  Assume that the maps $f_{i}$ have a common attractor fixed point $p\in\mathbb{R}$, i.e. $f_{i}(p)=p$
and $|f_{i}^{\prime}(p)|<1$ for all $i\in I$. Set $\mathcal{A}=\{f_{i}\}_{i\in I}$. Then, there is an open interval $U$ about $p$ such that $f(U)\subset U$, for all $f\in\mathcal{A}$. Moreover, each time varying map $\mathcal{F}=\{f_{n}\}_{n\in\mathbb{N}}$ with $f_{n}\in\mathcal{A}$ on $U$ has the shadowing, limit shadowing, s-limit shadowing and exponential limit shadowing properties.
\end{corollary}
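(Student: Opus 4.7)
The plan is to reduce this corollary to Theorem \ref{contracting} by producing an invariant open interval $U$ on which every $f_i$ restricts to a uniform contraction, so that any time varying map built from $\mathcal{A}$ on $U$ becomes a uniformly contracting time varying map.

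First, since $I$ is finite and $|f_i'(p)|<1$ for every $i\in I$, I can choose $\alpha\in(0,1)$ with $|f_i'(p)|<\alpha$ for all $i\in I$. Because each $f_i'$ is continuous at $p$, for every $i$ there is $r_i>0$ such that $|f_i'(x)|<\alpha$ on the open interval $(p-r_i,p+r_i)$. Setting $r=\min_{i\in I}r_i>0$ (this is where finiteness of $I$ is used) and $U=(p-r,p+r)$, I get $|f_i'(x)|<\alpha$ for every $x\in U$ and every $i\in I$.

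Next I verify that $f_i(U)\subset U$ and that the uniform contraction estimate holds. For any $x,y\in U$, by the mean value theorem applied to $f_i$ on the closed interval with endpoints $x,y$ (which lies in $U$ since $U$ is an interval), there is some $c$ between $x$ and $y$ with $|f_i(x)-f_i(y)|=|f_i'(c)||x-y|\le\alpha|x-y|$. Taking $y=p$ and using $f_i(p)=p$, I obtain $|f_i(x)-p|\le\alpha|x-p|<\alpha r<r$, so $f_i(x)\in U$ as required. The same estimate shows that the contracting ratio of the time varying map $\mathcal{F}=\{f_n\}_{n\in\mathbb{N}}$ on $(U,|\cdot|)$ satisfies
\begin{equation*}
\sup_{n\in\mathbb{N}}\sup_{\substack{x,y\in U\\ x\neq y}}\frac{|f_n(x)-f_n(y)|}{|x-y|}\le\alpha<1,
\end{equation*}
so $\mathcal{F}$ is uniformly contracting on the metric space $U$.

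Finally, I apply Theorem \ref{contracting} to $\mathcal{F}$ on $U$ to conclude that $\mathcal{F}$ has the shadowing, limit shadowing, s-limit shadowing and exponential limit shadowing properties. The only mild obstacle is a bookkeeping point: Theorem \ref{contracting} is stated for a time varying map on a metric space $X$, and here the ambient space is $U$ itself (not $\mathbb{R}$), so I should emphasize that $f_i(U)\subset U$ is precisely what lets me regard each $f_n|_U:U\to U$ as a self-map and apply the theorem directly; no further argument beyond the MVT estimate above is required.
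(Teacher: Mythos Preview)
Your proof is correct and follows essentially the same route as the paper: bound $|f_i'|$ uniformly by some $\alpha<1$ on a small interval about $p$, use the mean value theorem to obtain both the invariance $f_i(U)\subset U$ and the uniform contraction estimate, and then invoke Theorem~\ref{contracting}. The paper's version first cites a proposition from Devaney's textbook before arriving at the same derivative bound, so your argument is marginally more self-contained; note that both proofs tacitly assume continuity of $f_i'$ at $p$ (not merely differentiability of $f_i$), without which the neighborhood on which $|f_i'|<\alpha$ need not exist.
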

\begin{proof}
By \cite[Proposition 4.4]{RDEV}, for each $i\in I$ there is an open interval $U_{i}$ about $p$ such that if $x\in U_{i}$, then $f_{i}(x)\in U_{i}$ and $f_{i}^{n}(x)\to p$ as $n\to +\infty$. Hence, we can find an open interval $U\subset\cap_{i\in I}U_{i}$ and $0<\varepsilon<1$ such that if $x\in U$, then $|f_{i}^{\prime}(x)|<1-\varepsilon$, $f_{i}(x)\in U$ and $f_{i}^{n}(x)\to p$ as $n\to +\infty$, for every $i\in I$. This implies that for all $x,y\in U$, we have
$\dfrac{|f_{i}(x)-f_{i}(y)|}{|x-y|}<1-\varepsilon$. Hence, each time varying map $\mathcal{F}=\{f_{n}\}_{n\in\mathbb{N}}$ with $f_{n}\in\mathcal{A}$ on $U$ is uniformly contracting, and so by Theorem \ref{contracting} it has the shadowing, limit shadowing, s-limit shadowing and exponential limit shadowing properties.
\end{proof}
%%%%%%%%%%%%%%%%%%%%%%%%%%%%%%%%%%%%%%%%%%
In the following, we provide two uniformly contracting time varying maps.
\begin{example}
Let $\Sigma_{2}:=\{0,1\}^{\mathbb{N}}=\{x=(x_{1}x_{2}\ldots): x_{n}\in\{0,1\}\}$ be the Bernoulli space.
Consider in $\Sigma_{2}$ the distance defined by
\begin{equation*}
d(x,y) = \left\{
\begin{array}{rl}
2^{-N} & \text{if } x\neq y\ \textnormal{and}\ N=\min\{i:x_{i}\neq y_{i}\},\\
0 & \text{if } x=y.
\end{array} \right.
\end{equation*}
Now, let $f,g:\Sigma_{2}\to\Sigma_{2}$ be two maps defined as
follows
\begin{equation*}
f((x_{1}x_{2}\ldots))=(0x_{1}x_{2}\ldots)\qquad g((x_{1}x_{2}\ldots))=(1x_{1}x_{2}\ldots).
\end{equation*}
Then, any time varying map $\mathcal{F}=\{f_{n}\}_{n\in\mathbb{N}}$ with $f_{n}\in\{f,g\}$ is uniformly contracting, and so by Theorem \ref{contracting} it has the shadowing, limit shadowing, s-limit shadowing and exponential limit shadowing properties.
\end{example}
%%%%%%%%%%%%%%%%%%%%%%%%%%%%%%%%%%
\begin{example}
Let $X\subset \mathbb{R}^2$ be the Sierpinski triangle on the solid equilateral triangle
which is constructed by repeatedly removing inverted maximal equilateral (solid) triangles from a given equilateral (solid)
triangle. Denote the sets in this construction by $X_0, X_1, \cdots$, whereby $X=\bigcap_{n=0}^{\infty}X_n$.
Then $X$ is selfsimilar,
\begin{equation*}
  X=\bigcup_{i=1}^3 g_i(X),
\end{equation*}
where the $g_1, g_2, g_3:\mathbb{R}^2 \to \mathbb{R}^2$ are the homotheties of rate $1/2$ that keep one of the three
vertices of $X_0$ fixed, see \cite{Ri} for more details.
Then $g_i$, $i=1, 2, 3$, are uniformly contracting. Hence, by Theorem \ref{contracting}, each time varying map $\mathcal{F}=\{f_{n}\}_{n\in\mathbb{N}}$ with $f_{n}\in \{g_1, g_2, g_3\}$ is uniformly contracting, and so it has the shadowing, limit shadowing, s-limit shadowing and exponential limit shadowing properties.
\end{example}
%%%%%%%%%%%%%%%%%%%%%%%%%%%%%%%%%%%%%%%%%%%%%%%%%%%%
In the following theorem, we show that every uniformly expanding time varying map has the shadowing, limit shadowing, s-limit shadowing and exponential limit shadowing properties. Note that Castro, Rodrigues and Varandas (\cite[Lemma 2.1]{CARFBVP}) assert that the uniformly expanding time varying maps having the shadowing property. Also, Nazarian Sarkooh and Ghane (\cite[Proposition 4.12]{JNSFHG}) showed that the uniformly Ruelle-expanding time varying maps having the shadowing property. Here, we give a different proof for shadowing property of uniformly expanding time varying maps and use it to yield the limit shadowing, s-limit shadowing and exponential limit shadowing properties. Recall that an expanding and surjective map is invertible.
%%%%%%%%%%%%%%%%%%%%%%%%%%%%%%%%%%%%%%%%%%%%%%%%%%%%
\begin{theorem}\label{expanding}
Let $\mathcal{F}=\{f_{n}\}_{n\in\mathbb{N}}$ be a uniformly expanding time varying map of surjective maps on a complete metric space $(X,d)$. Then,
\begin{itemize}
\item[\textnormal{(a)}]
$\mathcal{F}$ has the shadowing property;
\item[\textnormal{(b)}]
$\mathcal{F}$ has the limit shadowing property;
\item[\textnormal{(c)}]
$\mathcal{F}$ has the exponential limit shadowing property;
\item[\textnormal{(d)}]
$\mathcal{F}$ has the s-limit shadowing property.
\end{itemize}
\end{theorem}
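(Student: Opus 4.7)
My plan is to invert time. Since $d(f_n(x),f_n(y)) \geq \beta\, d(x,y)$ with $\beta>1$ forces injectivity of each $f_n$, combined with the surjectivity hypothesis each $f_n$ is a bijection whose inverse $f_n^{-1}$ is $\beta^{-1}$-Lipschitz (as recalled in the remark preceding the theorem). All four conclusions will flow from the single ``pullback'' construction: for a given sequence $\{x_n\}_{n\geq 0}$ in $X$, set
\[
z_N := f_1^{-1} \circ f_2^{-1} \circ \cdots \circ f_N^{-1}(x_N), \qquad N \geq 0,
\]
and show that $\{z_N\}$ converges in the complete space $X$ to the desired shadowing point $z$.

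For part (a), I would first compute
\[
d(z_N, z_{N+1}) \leq \beta^{-N}\, d(x_N, f_{N+1}^{-1}(x_{N+1})) \leq \beta^{-(N+1)}\, d(f_{N+1}(x_N), x_{N+1}) < \beta^{-(N+1)}\delta,
\]
which makes $\{z_N\}$ Cauchy; let $z$ be its limit. Using the algebraic cancellation $\mathcal{F}_n \circ f_1^{-1} \circ \cdots \circ f_N^{-1} = f_{n+1}^{-1} \circ \cdots \circ f_N^{-1}$ (valid for $N \geq n$), a telescoping triangle-inequality argument yields
\[
d(\mathcal{F}_n(z_N), x_n) \leq \sum_{k=1}^{N-n} \beta^{-k}\, d(f_{n+k}(x_{n+k-1}), x_{n+k}) < \frac{\delta}{\beta - 1}.
\]
Passing to the limit via continuity of $\mathcal{F}_n$ and choosing $\delta = (\beta-1)\varepsilon/2$ proves shadowing.

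For parts (b), (c), (d) the same $z$ works, with $\tau_n := d(f_{n+1}(x_n), x_{n+1})$ replacing the uniform $\delta$ term by term. Boundedness of $\{\tau_n\}$ (automatic from $\tau_n\to 0$) keeps $\{z_N\}$ Cauchy via $d(z_N,z_{N+1})\leq \beta^{-(N+1)}\tau_N$, and the telescoping estimate passes to
\[
d(\mathcal{F}_n(z), x_n) \leq \sum_{k=1}^{\infty} \beta^{-k}\, \tau_{n+k-1}.
\]
In (b), $\tau_n \to 0$ forces the right-hand side to $0$ as $n \to \infty$ (split the sum at an index past which $\tau_j$ is uniformly small). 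In (c), $\tau_n \leq L\theta^n$ with $\theta \in (0,1) < \beta$ gives the geometric bound $L\theta^n/(\beta - \theta)$, so any $\theta_0 \in (0,1)$ qualifies as the threshold. Part (d) is then automatic: applied to a sequence that is simultaneously a $\delta$-pseudo orbit and a limit pseudo orbit, the same pullback $z$ realizes both the $\varepsilon$-shadow from (a) and the asymptotic tracking from (b).

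The main technical point I expect is verifying the cancellation identity $\mathcal{F}_n(z_N) = f_{n+1}^{-1} \circ \cdots \circ f_N^{-1}(x_N)$ for $N \geq n$ and justifying the $N \to \infty$ limit by continuity of $\mathcal{F}_n$; once this piece of bookkeeping is in place, all four conclusions reduce to the same geometric-series estimate, so no further ideas are required beyond careful indexing.
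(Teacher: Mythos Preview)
Your proposal is correct and follows essentially the same approach as the paper: both construct the pullback sequence $z_N = f_1^{-1}\circ\cdots\circ f_N^{-1}(x_N)$, show it is Cauchy in the complete space $X$, and verify that the limit $z$ shadows via a geometric-series estimate. Your presentation is somewhat more streamlined than the paper's, which tracks the same computations through the auxiliary ratio functions $\varphi_n(x,y)=d(f_n(x),f_n(y))/d(x,y)$ and the notation $z_n^{(k)}=\mathcal{F}_k(z_n)$, but the underlying argument is identical.
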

\begin{proof}
Here, we use the approach used in the proof of \cite[Theorem 2.2]{VGVG}. Given $n\in\mathbb{N}$, consider the function $\varphi_{n}:X\times X\to[0,+\infty)$, defined by
\begin{equation*}
\varphi_{n}(x,y) = \left\{
\begin{array}{rl}
\dfrac{d(f_{n}(x),f_{n}(y))}{d(x,y)} & \text{if } x\neq y,\\
\beta & \text{if } x=y,
\end{array} \right.
\end{equation*}
where $\beta>1$ is the expanding ratio of the uniformly expanding time varying map $\mathcal{F}$. Hence, one has
\begin{equation}\label{jjj1}
d(x,y)=\dfrac{d(f_{n}(x),f_{n}(y))}{\varphi_{n}(x,y)},\quad \varphi_{n}(x,y)\geq\beta\quad\textnormal{for all}\ x,y\in X\ \textnormal{and}\ n\in\mathbb{N}.
\end{equation}

$\textnormal{(a)}$. Given $\varepsilon>0$ take $\delta=(\beta-1)\varepsilon$, and let $\{x_{n}\}_{n\geq 0}$ be a $\delta$-pseudo orbit of $\mathcal{F}$, i.e. $d(f_{n+1}(x_{n}),x_{n+1})<\delta$ for all $n\geq 0$. Consider
the sequence $\{z_{n}\}_{n\geq 0}$ in $X$, defined as follows
\begin{equation}\label{jjj7}
z_{0}=x_{0},\quad z_{n}=f_{1}^{-1}\circ f_{2}^{-1}\circ\cdots\circ f_{n}^{-1}(x_{n})\quad\textnormal{for all}\ n\in\mathbb{N}.
\end{equation}
Then, $x_{n}=f_{n}\circ f_{n-1}\circ\cdots\circ f_{1}(z_{n})$ for all $n\in\mathbb{N}$. Given $n\in\mathbb{N}$ and $1\leq k\leq n$, denote
\begin{equation}\label{jjj2}
z_{n}^{(k)}=f_{k}\circ f_{k-1}\circ\cdots\circ f_{1}(z_{n}).
\end{equation}
Therefore, for any $n\in\mathbb{N}$ and $1\leq k\leq n$, one has
\begin{equation}\label{jjj3}
z_{n}^{(k)}=f_{k}(z_{n}^{(k-1)})\qquad x_{n}=z_{n}^{(n)}=f_{n}(z_{n}^{(n-1)}).
\end{equation}
We claim that $\{z_{n}\}_{n\geq 0}$ is a Cauchy sequence. Firstly, fixing $n\in\mathbb{N}$ and
$p\geq 1$, and using (\ref{jjj1}), (\ref{jjj2}) and (\ref{jjj3}), we obtain
\begin{eqnarray}\label{jjj4}
d(z_{n},z_{n+p})
&=& \dfrac{d(f_{1}(z_{n}),f_{1}(z_{n+p}))}{\varphi_{1}(z_{n},z_{n+p})}\notag\\
&=& \dfrac{d(z_{n}^{(1)},z_{n+p}^{(1)})}{\varphi_{1}(z_{n},z_{n+p})}\notag\\
&=& \dfrac{d(z_{n}^{(2)},z_{n+p}^{(2)})}{\varphi_{1}(z_{n},z_{n+p})\varphi_{2}(z_{n}^{(1)},z_{n+p}^{(1)})}\notag\\
&\vdots &\notag\\
&=& \dfrac{d(z_{n}^{(n)},z_{n+p}^{(n)})}{\varphi_{1}(z_{n},z_{n+p})\prod_{i=2}^{n}\varphi_{i}(z_{n}^{(i-1)},z_{n+p}^{(i-1)})}\notag\\
&=& \dfrac{d(x_{n},z_{n+p}^{(n)})}{\varphi_{1}(z_{n},z_{n+p})\prod_{i=2}^{n}\varphi_{i}(z_{n}^{(i-1)},z_{n+p}^{(i-1)})}.
\end{eqnarray}
Secondly, by induction on $p\geq 1$ we show that the following inequality holds
uniformly with respect to $n\in\mathbb{N}$:
\begin{equation}\label{jjj5}
d(x_{n},z_{n+p}^{(n)})\leq\delta\sum_{k=1}^{p}\beta^{-k}.
\end{equation}
Indeed, for $p=1$ the inequality (\ref{jjj5}) follows from (\ref{jjj1}) and (\ref{jjj3}):
\begin{equation}
d(x_{n},z_{n+1}^{(n)})=\dfrac{d(f_{n+1}(x_{n}),f_{n+1}(z_{n+1}^{(n)})}{\varphi_{n+1}(x_{n},z_{n+1}^{(n)})}=
\dfrac{d(f_{n+1}(x_{n}),x_{n+1})}{\varphi_{n+1}(x_{n},z_{n+1}^{(n)})}\leq\dfrac{\delta}{\beta}.
\end{equation}
Assume that (\ref{jjj5}) holds for some $p=q\geq 1$ uniformly on $n\in\mathbb{N}$. Taking into account this assumption, as well as (\ref{jjj1}) and (\ref{jjj3}), we prove (\ref{jjj5}) for
$p=q+1$:
\begin{eqnarray}\label{jjj9}
d(x_{n},z_{n+q+1}^{(n)})
&=& \dfrac{d(f_{n+1}(x_{n}),f_{n+1}(z_{n+q+1}^{(n)})}{\varphi_{n+1}(x_{n},z_{n+q+1}^{(n)})}\notag\\
&=& \dfrac{d(f_{n+1}(x_{n}),z_{n+1+q}^{(n+1)})}{\varphi_{n+1}(x_{n},z_{n+q+1}^{(n)})}\notag\\
&\leq & \dfrac{d(f_{n+1}(x_{n}),x_{n+1})+d(x_{n+1},z_{n+1+q}^{(n+1)})}{\varphi_{n+1}(x_{n},z_{n+1+q}^{(n)})}\notag\\
&\leq & \dfrac{1}{\beta}(\delta+\delta\sum_{k=1}^{q}\beta^{-k})=\delta\sum_{k=1}^{q+1}\beta^{-k}.
\end{eqnarray}
Therefore (\ref{jjj5}) holds.

Now, the relations (\ref{jjj1}), (\ref{jjj4}) and (\ref{jjj5}) give us the following estimation for $d(z_{n},z_{n+p})$ with any $n\in\mathbb{N}$ and $p\geq 1$:
\begin{eqnarray}\label{jjj6}
d(z_{n},z_{n+p})
&\leq & \dfrac{\delta\sum_{k=1}^{p}\beta^{-k}}{\varphi_{1}(z_{n},z_{n+p})\prod_{i=2}^{n}\varphi_{i}(z_{n}^{(i-1)},z_{n+p}^{(i-1)})}\notag\\
&\leq & \dfrac{\delta}{(\beta-1)\varphi_{1}(z_{n},z_{n+p})\prod_{i=2}^{n}\varphi_{i}(z_{n}^{(i-1)},z_{n+p}^{(i-1)})}\notag\\
&=& \dfrac{\varepsilon}{\varphi_{1}(z_{n},z_{n+p})\prod_{i=2}^{n}\varphi_{i}(z_{n}^{(i-1)},z_{n+p}^{(i-1)})}\leq\varepsilon\beta^{-n}.
\end{eqnarray}
This inequality proves the claim, i.e. $\{z_{n}\}_{n\geq 0}$ is a Cauchy sequence.
Therefore, the sequence $\{z_{n}\}_{n\geq 0}$ is convergent to some point $x\in X$. From (\ref{jjj2}) and (\ref{jjj6}) one has
\begin{equation*}
\lim_{n\to\infty} z_{n}^{(k)}=f_{k}\circ f_{k-1}\circ\cdots\circ f_{1}(x)=\mathcal{F}_{k}(x)\ \textnormal{for any}\ k\geq 1,
\end{equation*}
and
\begin{equation*}
d(z_{n},x)\leq\dfrac{\varepsilon}{\varphi_{1}(z_{n},x)\prod_{i=2}^{n}\varphi_{i}(z_{n}^{(i-1)},\mathcal{F}_{i-1}(x))}\quad\textnormal{as}\ p\to\infty,\ \textnormal{for}\ n\geq 1.
\end{equation*}
Hence, for $n\geq 1$ we get
\begin{eqnarray*}
d(\mathcal{F}_{n}(x),x_{n})
&=& d(f_{n}\circ\cdots\circ f_{1}(x),f_{n}\circ\cdots\circ f_{1}(z_{n}))\\
&=& \varphi_{n}(\mathcal{F}_{n-1}(x),z_{n}^{n-1})d(f_{n-1}\circ\cdots\circ f_{1}(x),f_{n-1}\circ\cdots\circ f_{1}(z_{n}))\\
&=& \varphi_{n}(\mathcal{F}_{n-1}(x),z_{n}^{n-1})\varphi_{n-1}(\mathcal{F}_{n-2}(x),z_{n}^{n-2})d(f_{n-2}\circ\cdots\circ f_{1}(x),f_{n-2}\circ\cdots\circ f_{1}(z_{n}))\\
&\vdots&\\
&=& \varphi_{n}(\mathcal{F}_{n-1}(x),z_{n}^{n-1})\varphi_{n-1}(\mathcal{F}_{n-2}(x),z_{n}^{n-2})\cdots\varphi_{1}(x,z_{n})
d(x,z_{n})\leq\varepsilon.
\end{eqnarray*}
Also, for the lacking case $n=0$
\begin{eqnarray*}
d(\mathcal{F}_{0}(x),x_{0})
&=& d(x,x_{0})=\dfrac{d(f_{1}(x),f_{1}(x_{0}))}{\varphi_{1}(x,x_{0})}\\
&=& \dfrac{d(f_{1}(x),x_{1})+d(x_{1},f_{1}(x_{0}))}{\varphi_{1}(x,x_{0})}\\
&=& \dfrac{d(\mathcal{F}_{1}(x),x_{1})+d(x_{1},f_{1}(x_{0}))}{\varphi_{1}(x,x_{0})}\\
&\leq & \dfrac{\varepsilon+\delta}{\beta}=\dfrac{\varepsilon+\beta\varepsilon-\varepsilon}{\beta}=\varepsilon.
\end{eqnarray*}
Hence, $\{x_{n}\}_{n\geq 0}$ is $\varepsilon$-shadowed by $x$. Thus $\mathcal{F}$ has the shadowing property which completes the proof of part $\textnormal{(a)}$.

$\textnormal{(b)}$. Let $\varepsilon>0$ and $\{x_{n}\}_{n\geq 0}$ be a limit pseudo orbit of $\mathcal{F}$, i.e. $d(f_{n+1}(x_{n}),x_{n+1})\to 0$ as $ n\to +\infty$. Therefore, there is $N_{0}\in\mathbb{N}$ such that $d(f_{n+1}(x_{n}),x_{n+1})<\varepsilon$, for all $n\geq N_{0}$. Now, consider the sequence $\{z_{n}\}_{n\geq 0}$ and notation $z_{n}^{(k)}$ as given by relations (\ref{jjj7}) and (\ref{jjj2}). We claim that $\{z_{n}\}_{n\geq 0}$ is a Cauchy sequence. Hence, by induction on $p\geq 1$ we show that the following inequality holds
uniformly with respect to $n\geq N_{0}$:
\begin{equation}\label{jjj11}
d(x_{n},z_{n+p}^{(n)})\leq\varepsilon\sum_{k=1}^{p}\beta^{-k}.
\end{equation}
Indeed, for $p=1$ the inequality (\ref{jjj11}) follows from (\ref{jjj1}) and (\ref{jjj3}):
\begin{equation*}
d(x_{n},z_{n+1}^{(n)})=\dfrac{d(f_{n+1}(x_{n}),f_{n+1}(z_{n+1}^{(n)})}{\varphi_{n+1}(x_{n},z_{n+1}^{(n)})}=
\dfrac{d(f_{n+1}(x_{n}),x_{n+1})}{\varphi_{n+1}(x_{n},z_{n+1}^{(n)})}\leq\dfrac{\varepsilon}{\beta}.
\end{equation*}
Assume that (\ref{jjj11}) holds for some $p=q\geq 1$ uniformly on $n\geq N_{0}$. Taking into account this assumption, as well as (\ref{jjj1}), (\ref{jjj3}) and (\ref{jjj9}), we prove (\ref{jjj11}) for
$p=q+1$:
\begin{eqnarray*}
d(x_{n},z_{n+q+1}^{(n)})
&\leq & \dfrac{d(f_{n+1}(x_{n}),x_{n+1})+d(x_{n+1},z_{n+1+q}^{(n+1)})}{\varphi_{n+1}(x_{n},z_{n+1+q}^{(n)})}\\
&\leq & \dfrac{1}{\beta}(\varepsilon+\varepsilon\sum_{k=1}^{q}\beta^{k})
=\varepsilon\sum_{k=1}^{q+1}\beta^{-k}.
\end{eqnarray*}
Therefore (\ref{jjj11}) holds. Now, the relations (\ref{jjj4}) and (\ref{jjj11}) give us the following estimation for $d(z_{n},z_{n+p})$ with any $n\geq N_{0}$ and $p\geq 1$:
\begin{eqnarray}\label{jjj12}
d(z_{n},z_{n+p})
&\leq & \dfrac{\varepsilon\sum_{k=1}^{p}\beta^{-k}}{\varphi_{1}(z_{n},z_{n+p})\prod_{i=2}^{n}\varphi_{i}(z_{n}^{(i-1)},z_{n+p}^{(i-1)})}\notag\\
&\leq & \dfrac{\varepsilon}{(\beta-1)\varphi_{1}(z_{n},z_{n+p})\prod_{i=2}^{n}\varphi_{i}(z_{n}^{(i-1)},z_{n+p}^{(i-1)})}\notag\\
&\leq & \dfrac{\varepsilon}{(\beta-1)}\beta^{-n}.
\end{eqnarray}
This inequality proves the claim, i.e. $\{z_{n}\}_{n\geq 0}$ is a Cauchy sequence.
Therefore, the sequence $\{z_{n}\}_{n\geq 0}$ is convergent to some point $x\in X$. Also, from (\ref{jjj12}) one has
\begin{equation*}
d(z_{n},x)\leq\dfrac{\varepsilon}{(\beta-1)\varphi_{1}(z_{n},x)\prod_{i=2}^{n}\varphi_{i}(z_{n}^{(i-1)},\mathcal{F}_{i-1}(x))}\quad\textnormal{as}\ p\to\infty,\ \textnormal{for}\ n\geq 1.
\end{equation*}
Hence, for $n\geq N_{0}$ we get
\begin{eqnarray*}
d(\mathcal{F}_{n}(x),x_{n})
&=& d(f_{n}\circ\cdots\circ f_{1}(x),f_{n}\circ\cdots\circ f_{1}(z_{n}))\\
&=& \varphi_{n}(\mathcal{F}_{n-1}(x),z_{n}^{n-1})d(f_{n-1}\circ\cdots\circ f_{1}(x),f_{n-1}\circ\cdots\circ f_{1}(z_{n}))\\
&\vdots&\\
&=& \varphi_{n}(\mathcal{F}_{n-1}(x),z_{n}^{n-1})\varphi_{n-1}(\mathcal{F}_{n-2}(x),z_{n}^{n-2})\cdots\varphi_{1}(x,z_{n})
d(x,z_{n})\leq\dfrac{\varepsilon}{\beta-1}
\end{eqnarray*}
that implies $d(\mathcal{F}_{n}(x),x_{n})\to 0$ as $n\to\infty$, because $\varepsilon$ is arbitrary. Thus the time varying map $\mathcal{F}$ has the limit shadowing property.

$\textnormal{(c)}$. Let $\{x_{n}\}_{n\geq 0}$ be a $\theta$-exponentially limit pseudo orbit of $\mathcal{F}$ with rate $\theta\in(0,1)$, i.e. $d(f_{n+1}(x_{n}),x_{n+1})\xrightarrow{\theta} 0$ as $ n\to +\infty$. Hence, there exists a constant $L>0$ such that $d(f_{n+1}(x_{n}),x_{n+1})\leq L\theta^{n}$ for $n\geq 0$.
Consider the sequence $\{z_{n}\}_{n\geq 0}$ and notation $z_{n}^{(k)}$ given by relations (\ref{jjj7}) and (\ref{jjj2}). We claim that $\{z_{n}\}_{n\geq 0}$ is a Cauchy sequence. Hence, by induction on $p\geq 1$ we show that the following inequality holds
uniformly with respect to $n\in\mathbb{N}$:
\begin{equation}\label{jjj8}
d(x_{n},z_{n+p}^{(n)})\leq L\delta^{n}\sum_{k=1}^{p}\beta^{-k}.
\end{equation}
Indeed, for $p=1$ the inequality (\ref{jjj8}) follows from (\ref{jjj1}) and (\ref{jjj3}):
\begin{equation*}
d(x_{n},z_{n+1}^{(n)})=\dfrac{d(f_{n+1}(x_{n}),f_{n+1}(z_{n+1}^{(n)})}{\varphi_{n+1}(x_{n},z_{n+1}^{(n)})}=
\dfrac{d(f_{n+1}(x_{n}),x_{n+1})}{\varphi_{n+1}(x_{n},z_{n+1}^{(n)})}\leq\dfrac{L\delta^{n}}{\beta}.
\end{equation*}
Assume that (\ref{jjj8}) holds for some $p=q\geq 1$ uniformly on $n\in\mathbb{N}$. Taking into account this assumption, as well as (\ref{jjj1}), (\ref{jjj3}) and (\ref{jjj9}), we prove (\ref{jjj8}) for
$p=q+1$:
\begin{eqnarray*}
d(x_{n},z_{n+q+1}^{(n)})
&\leq & \dfrac{d(f_{n+1}(x_{n}),x_{n+1})+d(x_{n+1},z_{n+1+q}^{(n+1)})}{\varphi_{n+1}(x_{n},z_{n+1+q}^{(n)})}\\
&\leq & \dfrac{1}{\beta}(L\theta^{n}+L\theta^{n+1}\sum_{k=1}^{q}\beta^{-k})\\
&\leq & \dfrac{1}{\beta}(L\theta^{n}+L\theta^{n}\sum_{k=1}^{q}\beta^{-k})=L\theta^{n}\sum_{k=1}^{q+1}\beta^{-k}.
\end{eqnarray*}
Therefore (\ref{jjj8}) holds. Now, the relations (\ref{jjj4}) and (\ref{jjj8}) give us the following estimation for $d(z_{n},z_{n+p})$ with any $n\in\mathbb{N}$ and $p\geq 1$:
\begin{eqnarray}\label{jjj10}
d(z_{n},z_{n+p})
&\leq & \dfrac{L\theta^{n}\sum_{k=1}^{p}\beta^{-k}}{\varphi_{1}(z_{n},z_{n+p})\prod_{i=2}^{n}\varphi_{i}(z_{n}^{(i-1)},z_{n+p}^{(i-1)})}\notag\\
&\leq & \dfrac{L\theta^{n}}{(\beta-1)\varphi_{1}(z_{n},z_{n+p})\prod_{i=2}^{n}\varphi_{i}(z_{n}^{(i-1)},z_{n+p}^{(i-1)})}\notag\\
&\leq & \dfrac{L}{(\beta-1)}\Big(\dfrac{\theta}{\beta}\Big)^{n}.
\end{eqnarray}
This inequality proves the claim, i.e. $\{z_{n}\}_{n\geq 0}$ is a Cauchy sequence.
Therefore, the sequence $\{z_{n}\}_{n\geq 0}$ is convergent to some point $x\in X$. Also, from (\ref{jjj10}) one has
\begin{equation*}
d(z_{n},x)\leq\dfrac{L\theta^{n}}{(\beta-1)\varphi_{1}(z_{n},x)\prod_{i=2}^{n}\varphi_{i}(z_{n}^{(i-1)},\mathcal{F}_{i-1}(x))}\quad\textnormal{as}\ p\to\infty,\ \textnormal{for}\ n\geq 1.
\end{equation*}
Hence, for $n\geq 1$ we get
\begin{eqnarray*}
d(\mathcal{F}_{n}(x),x_{n})
&=& d(f_{n}\circ\cdots\circ f_{1}(x),f_{n}\circ\cdots\circ f_{1}(z_{n}))\\
&=& \varphi_{n}(\mathcal{F}_{n-1}(x),z_{n}^{n-1})d(f_{n-1}\circ\cdots\circ f_{1}(x),f_{n-1}\circ\cdots\circ f_{1}(z_{n}))\\
&\vdots&\\
&=& \varphi_{n}(\mathcal{F}_{n-1}(x),z_{n}^{n-1})\varphi_{n-1}(\mathcal{F}_{n-2}(x),z_{n}^{n-2})\cdots\varphi_{1}(x,z_{n})
d(x,z_{n})\leq\Big(\dfrac{L}{\beta-1}\Big)\theta^{n}
\end{eqnarray*}
that implies $d(\mathcal{F}_{n}(x),x_{n})\xrightarrow{\theta} 0$ as $ n\to +\infty$. Thus the time varying map $\mathcal{F}$ has the exponential limit shadowing property.

Finally, part $\textnormal{(d)}$ is a direct consequence of our process in parts $\textnormal{(a)}$ and $\textnormal{(b)}$, which completes the proof of the theorem.
\end{proof}
%%%%%%%%%%%%%%%%%%%%%%%%%%%%%%%%%%%%%%%%%%%%%%%%%%%%
\begin{remark}
Note that the surjectivity of maps $f_{n}$ of time varying map $\mathcal{F}=\{f_{n}\}_{n\in\mathbb{N}}$ in Theorem \ref{expanding} is essential. Indeed, let $X=\{1\}\cup[2,+\infty)$ be a complete metric space endowed with the standard metric from $\mathbb{R}$, and consider the function $f:X\to X$, $f(x)=2x$ for all $x\in X$. Hence $f$ is uniformly expanding that is not surjective. Then time varying map $\mathcal{F}=\{f_{n}\}_{n\in\mathbb{N}}$  on $X$ with $f_{n}=f$ for all $n\in\mathbb{N}$ does not possess the shadowing property, for more details see \cite[Example 2.1]{VGVG}.
\end{remark}
%%%%%%%%%%%%%%%%%%%%%%%%%%%%%%%%%%%%%%%%%%%%%%%%%%%%
\begin{remark}
Memarbashi and Rasuli in \cite[Theorem 2.8]{MMMMMHHH} show that each uniformly expanding time varying map satisfies the h-shadowing property. Hence, by theorem \ref{theorem22}, each uniformly expanding time varying map on a compact metric space has the s-limit shadowing property, moreover it has the limit shadowing property (under some conditions). Nevertheless, for Theorem \ref{expanding} we give a different proof and use it to yield the exponential limit shadowing property, also it is useful for future studies.
\end{remark}
%%%%%%%%%%%%%%%%%%%%%%%%%%%%%%%%%%%%%%%%%%%%%%%%%%%%
\begin{example}\label{example2}
Let $f_{A}:\mathbb{T}^{d}\to\mathbb{T}^{d}$ be the linear endomorphism of the torus $\mathbb{T}^{d}=\mathbb{R}^{d}/\mathbb{Z}^{d}$ induced by
some matrix $A$ with integer coefficients and determinant different from zero. Assume that all the
eigenvalues $\lambda_{1},\lambda_{2},\ldots,\lambda_{d}$ of $A$ are larger than $1$ in
absolute value. Then, given any $1<\sigma<\inf_{i}|\lambda_{i}|$, there exists an inner product
in $\mathbb{R}^{d}$ relative to which $||Av||\geq\sigma ||v||$ for every $v\in\mathbb{R}^{d}$. This shows that the transformation $f_{A}$ is expanding, see \cite[Example 6.2]{JNSFHG}.

Now, let $\mathcal{A}$ be a non-empty finite set of different matrices enjoying the above conditions. Then, each time varying map $\mathcal{F}=\{f_{n}\}_{n\in\mathbb{N}}$ with $f_{n}\in\{f_{A}:A\in\mathcal{A}\}$ is uniformly expanding. Hence, by Theorem \ref{expanding}, it has the shadowing, limit shadowing, s-limit shadowing and exponential limit shadowing properties.
\end{example}
%%%%%%%%%%%%%%%%%%%%%%%%%%%%%%%%%%%%%%%%
In what follows, we show that any time varying map of a finite set of hyperbolic linear homeomorphisms on a Banach space with the same stable and unstable subspaces has the shadowing, limit shadowing, s-limit shadowing and exponential limit shadowing properties.
%%%%%%%%%%%%%%%%%%%%%%%%%%%%%%%%%%%%%%%%%%%%%%%%%%%%
\begin{definition}
Let $f:X\to X$ be a linear homeomorphism on a Banach space $X$. Then, $f$ is said to be \emph{hyperbolic} if there exist Banach subspaces $X_{s},X_{u}\subset X$, called stable and unstable subspaces, respectively, and a norm $\|.\|$ on $X$ compatible with the original Banach structure such that
\begin{equation*}
X=X_{s}\oplus X_{u},\quad f(X_{s})=X_{s},\quad f(X_{u})=X_{u},\quad \|f|_{X_{s}}\|<1\ \ \textnormal{and}\ \
\|f^{-1}|_{X_{u}}\|<1.
\end{equation*}
\end{definition}
%%%%%%%%%%%%%%%%%%%%%%%%%%%%%%%%%%%%%%%%%%%%%%%%%%%%
\begin{theorem}\label{hyperbolic}
Let $X$ be a Banach space, and let $\mathcal{A}$ be a finite set of hyperbolic linear homeomorphisms with the same stable and unstable subspaces. Then, any time varying map $\mathcal{F}=\{f_{n}\}_{n\in\mathbb{N}}$ with $f_{n}\in\mathcal{A}$ has the shadowing, limit shadowing, s-limit shadowing and exponential limit shadowing properties.
\end{theorem}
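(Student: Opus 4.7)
The plan is to reduce to Theorems \ref{contracting}, \ref{expanding}, and \ref{theorem1} via the common hyperbolic splitting $X=X_{s}\oplus X_{u}$. Because every $f\in\mathcal{A}$ preserves both $X_{s}$ and $X_{u}$, each $f_{n}$ does as well, and under the natural identification $X\cong X_{s}\times X_{u}$ the map $f_{n}$ factors as $f_{n}|_{X_{s}}\times f_{n}|_{X_{u}}$. Consequently $\mathcal{F}=\mathcal{F}_{s}\times\mathcal{F}_{u}$, where $\mathcal{F}_{s}:=\{f_{n}|_{X_{s}}\}_{n\in\mathbb{N}}$ and $\mathcal{F}_{u}:=\{f_{n}|_{X_{u}}\}_{n\in\mathbb{N}}$, so the strategy is to establish the four shadowing properties for each factor and then invoke the product theorem.

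First I would choose a norm $\|\cdot\|$ on $X$ equivalent to the original Banach norm under which every $f\in\mathcal{A}$ simultaneously satisfies $\|f|_{X_{s}}\|<1$ and $\|f^{-1}|_{X_{u}}\|<1$. Since $\mathcal{A}$ is finite and each element $f$ carries its own adapted norm, such a common adapted norm can be obtained by combining the individual adapted norms (for instance by taking a suitable maximum, possibly after passing to a sufficiently large fixed iterate to bring the operator norms below one). I would further arrange the norm to be of box type with respect to the splitting, $\|x_{s}+x_{u}\|=\max(\|x_{s}\|,\|x_{u}\|)$, so that the identification $X\cong X_{s}\times X_{u}$ becomes an isometry when the right-hand side is endowed with the max metric. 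Setting $\alpha:=\max_{f\in\mathcal{A}}\|f|_{X_{s}}\|<1$ and $\beta:=\min_{f\in\mathcal{A}}\|f^{-1}|_{X_{u}}\|^{-1}>1$ would provide the required uniform rates.

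With this choice of norm, by linearity, $\mathcal{F}_{s}$ is a uniformly contracting time varying map on the closed (hence complete) subspace $X_{s}$ with contracting ratio at most $\alpha<1$, so Theorem \ref{contracting} supplies the shadowing, limit shadowing, s-limit shadowing and exponential limit shadowing properties for $\mathcal{F}_{s}$. Dually, $\mathcal{F}_{u}$ is a uniformly expanding time varying map on the Banach space $X_{u}$ with expanding ratio at least $\beta>1$, and each $f_{n}|_{X_{u}}$ is surjective because $f_{n}$ is a homeomorphism that maps $X_{u}$ onto $X_{u}$; hence Theorem \ref{expanding} yields the same four shadowing properties for $\mathcal{F}_{u}$. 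Finally, combining the two factors through Theorem \ref{theorem1}, together with the product result for ordinary shadowing of Thakkar and Das, transfers all four properties from $\mathcal{F}_{s}$ and $\mathcal{F}_{u}$ to $\mathcal{F}=\mathcal{F}_{s}\times\mathcal{F}_{u}$.

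The main obstacle will be the first step: verifying that the finite family $\{f|_{X_{s}}:f\in\mathcal{A}\}$ admits a single norm on $X_{s}$ in which every element is a strict contraction, and dually for the inverses on $X_{u}$. This is where the common stable/unstable splitting and the finiteness of $\mathcal{A}$ must be used in an essential way; once this technical point is disposed of, the rest of the argument is a routine assembly of Theorems \ref{contracting}, \ref{expanding}, and \ref{theorem1}.
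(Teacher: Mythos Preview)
Your proposal follows exactly the same route as the paper: restrict $\mathcal{F}$ to the common stable and unstable subspaces to obtain a uniformly contracting factor $\mathcal{F}_{s}=\{f_{n}|_{X_{s}}\}$ and a uniformly expanding factor $\mathcal{F}_{u}=\{f_{n}|_{X_{u}}\}$, apply Theorems~\ref{contracting} and~\ref{expanding} to each, and then invoke Theorem~\ref{theorem1} together with the product result for ordinary shadowing from Thakkar--Das to transfer all four properties to $\mathcal{F}=\mathcal{F}_{s}\times\mathcal{F}_{u}$. The paper's proof is terser and simply asserts that the two factors are uniformly contracting and uniformly expanding without discussing the choice of a common adapted norm; your identification of this as the essential technical point is in fact more careful than the paper itself.
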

\begin{proof}
Take $\mathcal{G}=\{f_{n}|_{X_{s}}\}_{n\in\mathbb{N}}$ and
$\mathcal{H}=\{f_{n}|_{X_{u}}\}_{n\in\mathbb{N}}$. Then, obviously $\mathcal{G}$ is uniformly
contracting and $\mathcal{H}$ is uniformly expanding on $X_{s}$ and $X_{u}$, respectively. Hence, by Theorems \ref{contracting} and \ref{expanding}, $\mathcal{G}$ and $\mathcal{H}$ have the shadowing, limit shadowing, s-limit shadowing and exponential limit shadowing properties. On the other hand, $\mathcal{F}=\mathcal{G}\times \mathcal{H}$. So, by \cite[Theorem 3.2]{DTRD} and Theorem \ref{theorem1}, time varying map $\mathcal{F}$ has the shadowing, limit shadowing, s-limit shadowing and exponential limit shadowing properties.
\end{proof}
%%%%%%%%%%%%%%%%%%%%%%%%%%%%%%%%%%%%%%%%%%%%%%%%%%%%
%%%%%%%%%%%%%%%%%%%%%%%%%%%%%%%%%%%%%%%%%%%%%%%%%%%%
%%%%%%%%%%%%%%%%%%%%%%%%%%%%%%%%%%%%%%%%%%%%%%%%%%%%
\section*{Acknowledgements}
We would like to thank anonymous reviewer whose remarks improved
the presentation of the paper.
%%%%%%%%%%%%%%%%%%%%%%%%%%%%%%%%%%%%%%%%%%%%%%%%%%%%
%%%%%%%%%%%%%%%%%%%%%%%%%%%%%%%%%%%%%%%%%%%%%%%%%%%%

\end{document}